\newtheorem{theorem}{Theorem}[section]
\newtheorem{lemma}[theorem]{Lemma}
\newtheorem{proposition}{Proposition}[section]
\theoremstyle{definition}
\theoremstyle{remark}
\numberwithin{equation}{section}
\begin{document}
		\title[Relativistic BGK model for gas mixtures]{Relativistic BGK model for gas mixtures}

\author[Hwang]{Byung-Hoon Hwang}
\address{Department of Mathematics Education, Sangmyung University, 20 Hongjimun 2-gil, Jongno-Gu, Seoul 03016, Republic of Korea}
\email{bhhwang@smu.ac.kr}

\author[M-.S Lee ]{Myeong-Su Lee}
\address{Department of Mathematics, Sungkyunkwan University, Suwon 440-746, Republic of Korea}
\email{msl3573@skku.edu}

\author[S.-B. Yun]{Seok-Bae Yun}
\address{Department of Mathematics, Sungkyunkwan University, Suwon 440-746, Republic of Korea}
\email{sbyun01@skku.edu}

\keywords{kinetic theory of gases, Boltzmann equation, BGK model, gas mixtures, special relativity}

\begin{abstract}
Unlike the case for classical particles, the literature on BGK type models for relativistic gas mixture is extremely limited. 
There are a few results 
in which such relativistic BGK models for gas mixture are employed to compute transport coefficients. However,  to the best knowledge of authors, relativistic BGK models for gas mixtures with complete presentation of the relaxation operators are missing in the literature.
 In this paper, we fill this gap by suggesting a BGK model for relativistic gas mixtures for which the existence of each equilibrium coefficients in the relaxation operator is rigorously guaranteed in a way that all the essential physical properties are satisfied such as the conservation laws, the H-theorem, the capturing of the correct equilibrium state, the indifferentiability principle, and the recovery of the classical BGK model in the Newtonian limit.
\end{abstract}
\maketitle
\tableofcontents
\section{Introduction}
The BGK model \cite{BGK,W} is a kinetic equation originally designed to mimic the classical Boltzmann flows at a much lower computational cost. Due to the consistency with the Boltzmann equation, the BGK formulation has been widely developed to model the gas dynamics in various physical contexts. In the relativistic framework,
 such BGK type models were proposed by Marle \cite{Marle,Marle2}, and Anderson and Witting \cite{AW} for a single monatomic gas.
 In both models,the classcial Maxwellian is replaced with the relativistic Maxwellian (so-called the J\"{u}ttner distribution. See \cite{Juttner} ).
 The difference of these two models comes from how the four-velocity was chosen. In Marle's formulation, the four-velocity is regarded as a particle flow based on the Eckart frame \cite{Eckart}, whereas the four-velocity of the Anderson-Witting model is interpreted as an energy flow by the Landau-Lifshitz frame \cite{LL}. These models have been fruitfully used to understand various relativistic flow problems \cite{CKT,DHMNS,Kremer2,MBHS,MNR}. Mathematical investigation on these models started in the last decade \cite{BCNS,CJS,Hwang,HLY,HY,HY2}. Recently,  a new relativistic BGK model for
polyatomic particles in the Eckart frame was suggested in \cite{PR} based on the polyatomic generalization of the relativistic Maxwellian in \cite{PR2}
 A nice survey on this model can be found in \cite{ACPR,CPR,CPR2,HRY}. 


Gas mixture problem is important in the practical point of view, since virtually all physically relevant particle systems consist of several different species of particles. In classical framework, numerous BGK-type relaxation models have been proposed, which can be categorized into two classes according to whether the relaxation operator is given by a sum of bi-species relaxation operator or by  a single global relaxation operator. The first category began with the model by Gross and Krook \cite{GK} in 1956. After that, several models had been suggested by various researchers \cite{Greene,GSB,Hamel}. But none of these models guarantees all the basic properties of the Boltzmann equation such as the conservation laws, non-negativity of the distribution functions, H-theorem, indifferentiability and the correct equilibrium.  

This issue was addressed by Andries, Aoki, and Perthame \cite{AAP}, who pioneered the second category of models by employing a single BGK operator per species instead of a sum of bi-species operators. 
	Since then, many other models of the second category have been suggested \cite{B,BPS,GMS,TS}. 
Furthermore, due to the simplicity, the latter approach has been fruitfully adapted to extend the BGK model  to more general gas systems of various kinds, such as chemically reacting gases \cite{GRS,GS}, and polyatomic gas mixtures \cite{BMS,BM,BT}. 
	Recently, the interest on the BGK-type models of the first category has revived after a series of BGK-type models of the first category satisfying the conservation laws, non-negativity of the distribution functions, H-theorem have been suggested \cite{BBGSP,HHM,KPP,KPP2}.	We refer to \cite{BKPY}	for an  extension to quantum gas mixtures in 
	this direction.
	The existence theory of various BGK models for gas mixtures was addressed in \cite{BKPY2,KP2,Pirner}(first category), and \cite{KLY,KP2} (second category). For numerical simulations, see \cite{CBGR,CKP,GRS2,TVD}.  Review on BGK models for mixtures can be found in \cite{Pirner3,PS}.

Unlike the kinetic theories for classical particles, the related literature for BGK type models for relativistic gas mixture is extremely limited. There are a few papers \cite{Kremer,Kremer3,KP} in which relativistic BGK models for gas mixture are employed to compute transport coefficients. However,
to the best knowledge of authors, relativistic BGK models for gas mixtures with complete presentation of the relaxation operators are missing in the literature. The computation of transport coefficients is possible  even without the knowledge of how equilibrium coefficients in the relaxation operator are defined, but without such precise
definition, investigation of the dynamics of relativistic gases at the kinetic level using the model is impossible. 


In this regard, we suggest in this paper a consistent BGK model of the Marle-type for relativistic gas mixtures. 
Recall that the relativistic Boltzmann equation for gas mixtures reads
 \begin{align}\label{RB}
\partial_t f_i+\frac{cp_i}{p_i^0}\cdot\nabla_x f_i=Q_i,
\end{align}
where $f_i(x^{\mu}, p_i^{\mu})$ is the momentum distribution  representing the number density of $i$ species at the phase point $(x^{\mu}, p_i^{\mu})$. The right-hand side $Q_i$ is called the collision operator and given by
\begin{align*}
	Q_i=\sum_{j=1}^NQ_{ij}(f_i,f_j):=\sum_{j=1}^{N}\frac{1}{p^0_i}\int(f'_if'_j-f_if_j)F_{ij}\sigma_{ij}\,d\Omega\frac{dp_j}{p^0_j},
\end{align*}
where $F_{ij}$ is the invariant flux and $\sigma_{ij}d\Omega$ is the invariant elastic differential cross section for collision between species $i$ and $j$.  We replace this complicated collision operator with the following single relaxation operator:
\begin{align*}
	\widetilde{Q}_i=\frac{cm_i}{\tau_i p_i^0}(\mathcal{J}_i-f_i), \qquad i=1,\cdots,N.
\end{align*}
where the attractor $\mathcal{J}_i$ is given by the J\"{u}ttner distribution \cite{Juttner}
$$
\mathcal{J}_i=\frac{g_{s_{i}}}{h^3}\exp\left({\widetilde{\beta}\widetilde{\mu}_i-\widetilde{\beta}\widetilde{U}^\mu p_{i \mu }}\right)\qquad \text{with}\quad \widetilde{\beta}:=1/k\widetilde{T}
$$
where $h$ is the Planck constant, $k$ is the Boltzmann constant, and $g_{s_i}$ is the degeneracy factor of $i$ species with spin $s_i$:
$$
g_{s_i}=\begin{cases}
2s_i+1 & \text{for}\quad m\neq 0\cr
2s_i & \text{for}\quad m=0
\end{cases}.
$$
The equilibrium chemical potential $\widetilde{\mu}_i$, the equilibrium four-velocity $\widetilde{U}^\mu,$ and the equilibrium temperature $1/k\widetilde{\beta}$ are functions of $t$ and $x$, which are determined in a way that the conservation laws of each particle four-flow and total energy-momentum tensor are satisfied (for details, see Section 3). In this procedure, the equilibrium coefficient $\widetilde{\beta}$ is determined through the nonlinear relation in Proposition \ref{JF} (2):
$$
	\sum_{i=1}^{N}\frac{m_i}{\tau_i}\frac{\int_{\mathbb{R}^3} e^{-c\widetilde{\beta}p_i^0}\,dp_i}{\int_{\mathbb{R}^3} e^{-c\widetilde{\beta}p_i^0}\frac{dp_i}{p_i^0}}\int_{\mathbb{R}^3}f_i \,\frac{dp_i}{p_i^0}=\frac{1}{c}\left[\left( \sum_{i=1}^{N}\frac{m_i }{\tau_i}n_iU_i^\mu\right)\left( \sum_{j=1}^{N}\frac{m_j }{\tau_j}n_jU_{j\mu}\right)\right]^{\frac{1}{2}}.
$$
 Note that  this highly nonlinear relation inevitably leads to a non-trivial determination problem as to whether the equilibrium coefficients $\widetilde{\beta}$ is well-defined by the momentum distributions $f_i$. Such issue, however, has never been addressed for relativistic gas mixtures.  

In this regards, we prove in this paper the well-definedness of equilibrium coefficients $\widetilde{\mu}_i$, $\widetilde{U}^\mu$, and $\widetilde{\beta}$ using the constraint related to the conservation laws (see Theorem \ref{JF}). And we show that our model fulfills the
 fundamental kinetic properties: non-negativity of distribution functions, conservation laws, H-theorem, capturing of the correct equilibrium state, and indifferentiability principle. We also show that our model converges into a well-known classical multicomponent BGK model in the non-relativistic limit.\\

This paper is organized as follows. In Section 2, we introduce some preliminaries on
the relativistic kinetic theory. In Section 3, we briefly review important properties of the Boltzmann collision operator. Then, in Section 4, we present a relaxation operator for relativistic gas mixtures and show that the equilibrium coefficients constituting of the relaxation operator are uniquely determined solely by the constraints from the conservation laws.
In Section 5, we prove that our model satisfies the essential properties of the relativistic Boltzmann equation presented in Section 3. 
Finally, Section 6 is devoted to showing
that our model can recover the classical BGK model for gas mixtures
in the Newtonian limit.

\section{Preliminaries}
Throughout this paper, we consider relativistic gas mixtures of $N$ constituents where only elastic collisions occur. These gas mixtures are considered in a Minkowski spacetime with metric tensor $\eta_{\mu\nu}$ and its inverse $\eta^{\mu\nu}$ given by 
$$
\eta_{\mu\nu}=\eta^{\mu\nu}=\text{diag}(1,-1,-1,-1)
$$
where the Greek indices range from $0$ to $3$.    We use the raising and lowering indices
$$
\eta_{\mu\nu} a^\nu= a_\mu,\qquad \eta^{\mu\nu} a_\nu= a^\mu
$$ and follow the Einstein summation convention so that
$$
a^\mu b_\mu =\eta_{\mu\nu}a^\mu b^\nu= a^0 b^0-\sum_{j=1}^3a^j  b^j=	a_\mu b^\mu.
$$
 The particles of $i$ species are characterized by the space-time coordinates $x^{\mu}$ and the four-momentums $p_i^{\mu}$
 $$
 x^{\mu} =(ct,x)\in \mathbb{R}_+\times \Omega,\qquad p_i^{\mu}=(\sqrt{(cm_i)^2+|p_i|^2},p_i)\in\mathbb{R}_+\times\mathbb{R}^3.
 $$ 
Here $p_i^0$ is given by the mass-shell condition $ p_i^\mu p_{i \mu }=(cm_i)^2$, where $c$ is the speed of light and $m_i$ is the rest mass of $i$ species. Unlike the classical setting, the phase point $(x^\mu,p_i^\mu)$ has dependency on species $i$ since $p_i$ represents the microscopic momentum.

The function $f_i(x^{\mu}, p_i^{\mu})$ is the momentum distribution  representing the number density of $i$ species at the phase point $(x^{\mu}, p_i^{\mu})$. The partial particle four-flow $N_i^\mu$ and the partial energy-momentum tensor $T_i^{\mu\nu}$ are defined by
$$
N_i^\mu=c\int_{\mathbb{R}^3}p^\mu_if_i \,\frac{dp_i}{p_i^0},\qquad T_i^{\mu\nu}=c\int_{\mathbb{R}^3}p_i^\mu p_i^\nu f_i \frac{dp_i}{p_i^0}.
$$
The particle four-flow of gas mixtures $N^\mu$ and the energy-momentum tensor of gas mixtures $T^{\mu\nu}$ are obtained by
$$
N^\mu=\sum_{i=1}^{N}N_i^\mu, \qquad T^{\mu\nu}=\sum_{i=1}^{N}T_i^{\mu\nu}.
$$
According to the Eckart frame \cite{Eckart}, $N_i^\mu$ is decomposed into
\begin{equation}\label{eckart}
N_i^\mu= n_iU_i^\mu
\end{equation}
where $n_i$ is the (macroscopic) number density, and $U_i^\mu$ is the Eckart four-velocity
\begin{align*}\begin{split}
n_i&=\frac{1}{c}\int_{\mathbb{R}^3}p^\mu_i U_{i \mu }f_i \,\frac{dp_i}{p_i^0}=\Biggl\{\biggl(\int_{\mathbb{R}^3}f_i \,dp_i\biggl)^2-\sum_{j=1}^3\biggl(\int_{\mathbb{R}^3}p^j_if_i \,\frac{dp_i}{p_i^0}\biggl)^2\Biggl\}^{\frac{1}{2}},\cr
U_i^\mu&=\frac{c}{n_i}\int_{\mathbb{R}^3}p^\mu_if_i \,\frac{dp_i}{p_i^0}.
\end{split}\end{align*}
Here, $U^\mu_i$ has a constant length in the following sense
$$
U^\mu_i U_{i \mu }=c^2,\qquad \text{and hence}\qquad U_i^\mu=\left(\sqrt{c^2+|U_i|^2},U_i\right).
$$
The entropy four-flow of gas mixtures is defined as
\begin{equation*}\label{entropy}
S^\mu= -kc\sum_{i=1}^N\int_{\mathbb{R}^3}p_i^\mu  f_i \ln\left( \frac{f_i h^3}{g_{s_i}}\right) \,\frac{dp_i}{p_i^0}.
\end{equation*}

\section{Boltzmann collision operator for relativistic gas mixtures}
In this section, we briefly review the crucial properties of the Boltzmann collision operator for relativistic gas mixtures. These are the properties that the relaxation operator, to be constructed in the next section, is expected to satisfy.

\begin{enumerate}
	\item {\bf Conservation laws:} The Boltzmann collision operator $Q_i$ satisfies the following relations:
	\begin{align*}
		\begin{split}
			&\int_{\mathbb{R}^3} Q_i\, dp_i=0 \quad (i=1,\cdots,N), \qquad \sum_{i=1}^{N}\int_{\mathbb{R}^3} p_i^\mu Q_i\,dp_i=0,
		\end{split}
	\end{align*}
	which implies the following conservation laws for the partial particle four-flows $N_i^\mu$ and the energy-momentum tensor $T^{\mu\nu}$ of gas mixtures:
	$$
	\frac{\partial N_i^\mu}{\partial x^\mu}=0,\qquad \frac{\partial T^{\mu\nu}}{\partial x^\nu}=0.
	$$
	\item {\bf Equilibria:} In equilibrium, the momentum distribution $f_i$ is given by the J\"{u}ttner distribution which shares a common four-velocity and a common temperature. 
	That is,
	\begin{align*}
		Q_i=0,\ (\forall i=1,\cdots,N)\ \implies\ f_i=\frac{g_{s_{i}}}{h^3}\exp\left(\beta_E\mu_{E_{i}}\right)\exp\left(-\beta_E U_E^\mu p_{i \mu }\right)
	\end{align*}
	for some chemical potentials $\mu_{E_i}$ $(i=1,\cdots,N)$, four-velocity $U_E^\mu$, and temperature $1/k\beta_E$.
	\item {\bf H-theorem:} The following inequality holds:
	\begin{align*}
		\sum_{i=1}^N  \int_{\mathbb{R}^3} Q_i \ln f_i\, dp_i\leq0,
	\end{align*}
	which gives the H-theorem:
	$$
	\frac{\partial S^\mu}{\partial x^\mu} \ge 0.
	$$
	\item {\bf Indifferentiability principle:} When all the masses $m_i$ and the elastic differential cross sections $\sigma_{ij}$ are independent of the gas species, the following identity holds
	\begin{align*}
		\sum_{i=1}^N\sum_{j=1}^NQ_{ij}(f_i,f_j)=Q\left(\sum_{i=1}^Nf_i,\sum_{j=1}^Nf_j\right),
	\end{align*}
	where $Q$ denotes the relativistic collision operator for single-species gases, so that 
	the total distribution $f=\sum_{i=1}^Nf_i$ satisfies  the relativistic Boltzmann equation for single-species gases
	\begin{align*}
		\partial_t f+\frac{cp}{p^0}\cdot\nabla_x f=Q(f,f).
	\end{align*}
\end{enumerate}
For more details of the relativistic Boltzmann equation, see \cite{CK}.

\section{Construction of a relaxation model for relativistic gas mixtures}
In this section, we construct a relaxation model of the relativistic Boltzmann equation for gas mixtures that can reproduce all the properties presented in Section 3.
For this, we replace the Boltzmann collision operator in the right hand side of \eqref{RB} by a single Marle-type relaxation operator as follows:
\begin{align}\label{Marle}
\partial_t f_i+\frac{cp_i}{p_i^0}\cdot\nabla_x f_i=\widetilde{Q}_i:=\frac{cm_i}{\tau_i p_i^0}(\mathcal{J}_i-f_i) \qquad i=1,\cdots,N
\end{align}
where $\tau_i$ denotes the characteristic time of order of the time between collisions. 
In view of the property $(2)$ on the equilibrium state of the relativistic Boltzmann equation for gas mixture in the previous section, we choose  $\mathcal{J}_i$ of the following form:
\begin{align}\label{Juttner}
\mathcal{J}_i=\frac{g_{s_{i}}}{h^3}\exp\left(\widetilde{\beta}\widetilde{\mu}_i\right)\exp\left(-\widetilde{\beta}\widetilde{U}^\mu p_{i \mu }\right),
\end{align}
with equilibrium coefficients  $\widetilde{\mu}_i$ $(i=1,\cdots,N)$, $\widetilde{U}^\mu$ and $1/k\widetilde{\beta}$ representing the $i$-th equilibrium chemical potential, common four-velocity, and common temperature respectively. 
Here $\widetilde{U}^\mu$ has a constant length in the following sense:
$$
\widetilde{U}^\mu \widetilde{U}_\mu=c^2,\qquad \text{and hence} \qquad \widetilde{U}^0=\sqrt{c^2+ |\widetilde{U} |^2}.
$$ 
The form of $\mathcal{J}_i$ in \eqref{Juttner}  naturally drives the momentum distributions $f_i$ towards J\"{u}ttner distributions with a common Eckart four-velocity and a common temperature. Details can be found in Subsection 4.5. For our model to be consistent,  we choose the $N+4$ equilibrium coefficients $\widetilde{\mu}_i$ $(i=1,\cdots,N)$, $\widetilde{U}$ and $1/k\widetilde{\beta}$ in a way to satisfy the following $N+4$ relations: 
\begin{align}\label{eq2}
\begin{split}
&\int_{\mathbb{R}^3} \widetilde{Q}_i\, dp_i=0 \quad (i=1,\cdots,N), \qquad \sum_{i=1}^{N}\int_{\mathbb{R}^3} p_i^\mu \widetilde{Q}_i\,dp_i=0,
\end{split}
\end{align}
yielding the conservation of the partial particle four-flow and energy-momentum tensor of gas mixtures:
$$
\frac{\partial N_i^\mu}{\partial x^\mu}=0,\qquad \frac{\partial T^{\mu\nu}}{\partial x^\nu}=0.
$$
 The remaining part of this section is devoted to proving that the equations \eqref{eq2} uniquely determine the equilibrium coefficients $\widetilde{\mu}_i,\widetilde{U}^\mu$ and $\widetilde{\beta}$, which means our model is well-defined.
\begin{proposition}\label{JF}
Let $f_i\equiv f_i(p_i^\mu)\ge 0$ $(i=1,\cdots,N)$ be integrable and not identically zero in the almost-everywhere sense so that $N_i^\mu$ and $T_i^{\mu\nu}$ exist.   Then,	the system of equations (\ref{eq2}) leads to	
	\begin{enumerate}
	\item The presentation of the head part of the  J\"{u}ttner distribution using the moments of $f_i$: 
	\begin{equation*}
	\frac{g_{si}}{h^3}\exp\left(\widetilde{\beta}\widetilde{\mu}_i\right)=\frac{\int_{\mathbb{R}^3}f_i \,\frac{dp_i}{p_i^0}}{\int_{\mathbb{R}^3} e^{-\widetilde{\beta}\widetilde{U}^\mu p_{i\mu}} \,\frac{dp_i}{p_i^0}}.
	\end{equation*}
	\item The nonlinear relation satisfied by $\widetilde{\beta}$:
	\begin{align*}
	\sum_{i=1}^{N}\frac{m_i}{\tau_i}\frac{\int_{\mathbb{R}^3} e^{-c\widetilde{\beta}p_i^0}\,dp_i}{\int_{\mathbb{R}^3} e^{-c\widetilde{\beta}p_i^0}\frac{dp_i}{p_i^0}}\int_{\mathbb{R}^3}f_i \,\frac{dp_i}{p_i^0}=\frac{1}{c}\left[\left( \sum_{i=1}^{N}\frac{m_i }{\tau_i}n_iU_i^\mu\right)\left( \sum_{j=1}^{N}\frac{m_j }{\tau_j}n_jU_{j\mu}\right)\right]^{\frac{1}{2}}.
	\end{align*}
	\item The presentation of the equilibrium coefficient $\widetilde{U}^\mu$ using the macroscopic density $n_i$ and the Eckart four-velocity $U_i^{\mu}$:
	\begin{align*}
	\widetilde{U}^\mu=c\frac{\sum_{i=1}^{N}\frac{m_i}{\tau_i}n_iU_i^\mu}{\left[\left( \sum_{i=1}^{N}\frac{m_i }{\tau_i}n_iU_i^\mu\right)\left( \sum_{j=1}^{N}\frac{m_j }{\tau_j}n_jU_{j\mu}\right)\right]^{\frac{1}{2}}}.
	\end{align*}
	\end{enumerate}
\end{proposition}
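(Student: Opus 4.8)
The plan is to extract the three assertions in turn from the two families of constraints in \eqref{eq2}, handling the $N$ scalar constraints first and the single vector constraint second. Throughout, the engine of the argument is the Lorentz covariance of the Jüttner moments, which lets me evaluate every equilibrium integral in the rest frame of $\widetilde{U}^\mu$.

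First I would dispose of assertion (1). Since $cm_i/\tau_i$ is a nonzero constant, each scalar constraint $\int_{\mathbb{R}^3}\widetilde{Q}_i\,dp_i=0$ is equivalent to $\int_{\mathbb{R}^3}\mathcal{J}_i\,\tfrac{dp_i}{p_i^0}=\int_{\mathbb{R}^3}f_i\,\tfrac{dp_i}{p_i^0}$. Substituting the explicit form \eqref{Juttner} and pulling the constant head $\tfrac{g_{s_i}}{h^3}e^{\widetilde{\beta}\widetilde{\mu}_i}$ out of the integral gives (1) at once, the denominator $\int_{\mathbb{R}^3}e^{-\widetilde{\beta}\widetilde{U}^\mu p_{i\mu}}\tfrac{dp_i}{p_i^0}$ being finite and strictly positive for $\widetilde{\beta}>0$ (the exponent grows like $|p_i|$), while the right-hand side is positive because $f_i\ge 0$ is not a.e.\ zero.

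The key intermediate step is to compute the four-flow of $\mathcal{J}_i$. Because $\mathcal{J}_i$ and $\tfrac{dp_i}{p_i^0}$ are Lorentz scalars, the quantity $\int_{\mathbb{R}^3}p_i^\mu\mathcal{J}_i\,\tfrac{dp_i}{p_i^0}$ is a genuine four-vector whose integrand involves only the four-vector $\widetilde{U}^\mu$; hence it must equal $A_i\widetilde{U}^\mu$ for a scalar $A_i$. I would pin down $A_i$ by evaluating both sides in the frame where $\widetilde{U}^\mu=(c,0,0,0)$: the spatial components vanish by oddness, and the time component yields $cA_i=\int_{\mathbb{R}^3}\mathcal{J}_i\,dp_i=\tfrac{g_{s_i}}{h^3}e^{\widetilde{\beta}\widetilde{\mu}_i}\int_{\mathbb{R}^3}e^{-c\widetilde{\beta}p_i^0}\,dp_i$. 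Inserting the head from (1), and noting that the invariant denominator there equals $\int_{\mathbb{R}^3}e^{-c\widetilde{\beta}p_i^0}\tfrac{dp_i}{p_i^0}$ in this same frame, gives $A_i=\tfrac1c\big(\int_{\mathbb{R}^3}e^{-c\widetilde{\beta}p_i^0}dp_i\,/\int_{\mathbb{R}^3}e^{-c\widetilde{\beta}p_i^0}\tfrac{dp_i}{p_i^0}\big)\int_{\mathbb{R}^3}f_i\,\tfrac{dp_i}{p_i^0}$.

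Now I would feed this into the vector constraint. Writing the $f_i$-part via the Eckart decomposition \eqref{eckart} as $\int_{\mathbb{R}^3}p_i^\mu f_i\,\tfrac{dp_i}{p_i^0}=\tfrac1c n_i U_i^\mu$, the relation $\sum_i\int_{\mathbb{R}^3}p_i^\mu\widetilde{Q}_i\,dp_i=0$ collapses to $\mathcal{A}\,\widetilde{U}^\mu=\sum_i\tfrac{m_i}{\tau_i}n_iU_i^\mu=:V^\mu$, where $\mathcal{A}:=\sum_i\tfrac{cm_i}{\tau_i}A_i$ is exactly the left-hand side of (2). Contracting with itself and using $\widetilde{U}^\mu\widetilde{U}_\mu=c^2$ gives $\mathcal{A}^2c^2=V^\mu V_\mu$, and taking the positive root produces (2) while dividing $V^\mu$ by $\mathcal{A}$ produces (3). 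The one point demanding genuine care, and the place I expect to be the real obstacle, is the well-posedness of that square root and division: one must verify that $V^\mu$ is future-pointing timelike, so that $V^\mu V_\mu>0$ and the sign of the root is forced. This follows because each $U_i^\mu$ is future-pointing timelike ($U_i^0>0$, $U_i^\mu U_{i\mu}=c^2$) with strictly positive weight $\tfrac{m_i}{\tau_i}n_i>0$, and the open future cone is convex, so any such positive combination remains strictly inside it; this simultaneously guarantees $\mathcal{A}>0$ and makes the solution of $\mathcal{A}\,\widetilde{U}^\mu=V^\mu$ with $\widetilde{U}^\mu\widetilde{U}_\mu=c^2$ consistent and unique. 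The remaining work is routine bookkeeping: convergence of the Jüttner integrals for $\widetilde{\beta}>0$, and keeping the invariant integrals carefully distinct from the frame-dependent $\int_{\mathbb{R}^3}e^{-c\widetilde{\beta}p_i^0}\,dp_i$.
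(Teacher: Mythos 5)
Your proposal is correct and follows essentially the same route as the paper: derive (1) directly from the scalar constraints, evaluate the J\"{u}ttner four-flow in the rest frame of $\widetilde{U}^\mu$ (the paper does this via an explicit Lorentz change of variables $P_i^\mu=\Lambda p_i^\mu$, you via covariance plus rest-frame evaluation --- the same computation, with the spatial parts killed by oddness), then contract the resulting vector identity $\mathcal{A}\,\widetilde{U}^\mu=\sum_i\frac{m_i}{\tau_i}n_iU_i^\mu$ with itself to obtain (2) and divide to obtain (3). Your extra verification that $\sum_i\frac{m_i}{\tau_i}n_iU_i^\mu$ is future-pointing timelike (so the square root is real, positive, and the sign is forced) is a worthwhile point that the paper leaves implicit in this proposition, surfacing only later via the Cauchy--Schwarz bound $U_i^\mu U_{j\mu}\ge c^2$ in the proof of Proposition \ref{JF2}.
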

	\begin{proof}
		\noindent (1) Substituting the relaxation operator $\widetilde{Q}_i$ of \eqref{Marle} into \eqref{eq2}, we have
		\begin{align*} 
		\int_{\mathbb{R}^3}\mathcal{J}_i \,\frac{dp_i}{p_i^0}&=\int_{\mathbb{R}^3}f_i \,\frac{dp_i}{p_i^0},\qquad
		\sum_{i=1}^{N}\frac{m_i}{\tau_i }\int_{\mathbb{R}^3}p^\mu_i\mathcal{J}_i \,\frac{dp_i}{p_i^0}=\sum_{i=1}^{N}\frac{m_i}{\tau_i }\int_{\mathbb{R}^3}p^\mu_if_i \,\frac{dp_i}{p_i^0}
		\end{align*}
		which, together with \eqref{eckart} and \eqref{Juttner}, yields
		\begin{align}\label{constraint}\begin{split} 
		\frac{g_{si}}{h^3}e^{\widetilde{\beta}\widetilde{\mu}_i}\int_{\mathbb{R}^3} e^{-\widetilde{\beta}\widetilde{U}^\mu p_{i\mu}} \,\frac{dp_i}{p_i^0}&=\int_{\mathbb{R}^3}f_i \,\frac{dp_i}{p_i^0},\cr 
		\sum_{i=1}^{N}\frac{m_i}{\tau_i }\frac{g_{si}}{h^3}e^{\widetilde{\beta}\widetilde{\mu}_i}\int_{\mathbb{R}^3}p^\mu_i e^{-\widetilde{\beta}\widetilde{U}^\mu p_{i\mu}} \,\frac{dp_i}{p_i^0}&=\sum_{i=1}^{N}\frac{m_i}{c\tau_i }n_iU^\mu_i		\end{split}\end{align}		
		for $i=1,\cdots, N$. 
		From the first identity of \eqref{constraint}, one finds
		\begin{equation}\label{gh}
		\frac{g_{si}}{h^3}e^{\widetilde{\beta}\widetilde{\mu}_i}=\frac{\int_{\mathbb{R}^3}f_i \,\frac{dp_i}{p_i^0}}{\int_{\mathbb{R}^3} e^{-\widetilde{\beta}\widetilde{U}^\mu p_{i\mu}} \,\frac{dp_i}{p_i^0}}.
		\end{equation}
		
		\noindent (2) We insert \eqref{gh} into the second identity of \eqref{constraint} to get
		\begin{equation}\label{constraint2}
		\sum_{i=1}^{N}\frac{m_i}{\tau_i }\frac{\int_{\mathbb{R}^3}p^\mu_i e^{-\widetilde{\beta}\widetilde{U}^\mu p_{i\mu}} \,\frac{dp_i}{p_i^0}}{\int_{\mathbb{R}^3} e^{-\widetilde{\beta}\widetilde{U}^\mu p_{i\mu}} \,\frac{dp_i}{p_i^0}}\int_{\mathbb{R}^3}f_i \,\frac{dp_i}{p_i^0}=\sum_{i=1}^{N}\frac{m_i}{c\tau_i }n_iU^\mu_i.
		\end{equation}
		To simplify the left-hand side of \eqref{constraint2}, we consider the following Lorentz transformation $\Lambda$ \cite{HRY,Strain2}
		\begin{align}\label{Lorentz transform}
		\Lambda=
		\begin{bmatrix}
		c^{-1}\widetilde{U}^0 & -c^{-1}\widetilde{U}^1 & -c^{-1}\widetilde{U}^2 & -c^{-1}\widetilde{U}^3 \cr
		-\widetilde{U}^1&  1+(\widetilde{U}^0-1)\frac{(\widetilde{U}^1)^2}{|\widetilde{U}|^2}&(\widetilde{U}^0-1)\frac{\widetilde{U}^1\widetilde{U}^2}{|\widetilde{U}|^2}  &(\widetilde{U}^0-1)\frac{\widetilde{U}^1\widetilde{U}^3}{|\widetilde{U}|^2}  \cr
		-\widetilde{U}^2& (\widetilde{U}^0-1)\frac{\widetilde{U}^1\widetilde{U}^2}{|\widetilde{U}|^2} &  1+(\widetilde{U}^0-1)\frac{(\widetilde{U}^2)^2 }{|\widetilde{U}|^2}&(\widetilde{U}^0-1)\frac{\widetilde{U}^2\widetilde{U}^3}{|\widetilde{U}|^2}  \cr
		-\widetilde{U}^3&  (\widetilde{U}^0-1)\frac{\widetilde{U}^1\widetilde{U}^3}{|\widetilde{U}|^2}& (\widetilde{U}^0-1)\frac{\widetilde{U}^2\widetilde{U}^3}{|\widetilde{U}|^2} &  1+(\widetilde{U}^0-1)\frac{(\widetilde{U}^3)^2}{|\widetilde{U}|^2}
		\end{bmatrix}
		\end{align}
		which maps $\widetilde{U}^\mu$ into the local rest frame $(c,0,0,0)$. Thus, it holds that
		\begin{equation}\label{c000}
		\Lambda \widetilde U^{\mu}=(c,0,0,0).
		\end{equation}
		 We set $P_i^\mu=\Lambda p_i^\mu$ and recall that the Minkowski inner product is invariant under the Lorentz transformation $\Lambda$ to see
		$$
		c^2m_i^2= p_i^\mu p_{i\mu}= \Lambda p_i^\mu \Lambda p_{i\mu}=P^\mu_i P_{i\mu}=\left(P_i^0\right)^2-\left|P_i\right|^2
		$$
		and hence the energy variable of $P_i^\mu$ is given by 
		\begin{equation}\label{P^0}
		P^0_i=\sqrt{(cm_i)^2+\left|P_i\right|^2}.
		\end{equation}
		With this in mind, we make the change of variables $P_i^\mu=\Lambda p_i^\mu$ to \eqref{constraint2} so that
		\begin{equation}\label{constraint3}
		\sum_{i=1}^{N}\frac{m_i}{\tau_i }\frac{\Lambda^{-1}\int_{\mathbb{R}^3}P^\mu_i e^{-c\widetilde{\beta}P_i^0} \,\frac{dP_i}{P_i^0}}{\int_{\mathbb{R}^3} e^{-c\widetilde{\beta}P_i^0} \,\frac{dP_i}{P_i^0}}\int_{\mathbb{R}^3}f_i \,\frac{dp_i}{p_i^0}=\sum_{i=1}^{N}\frac{m_i}{c\tau_i }n_iU^\mu_i.
		\end{equation}
		We used the fact that the volume element $dp_i/p_i^0$ are invariant under $\Lambda$ and
		\[
		\widetilde{U}^\mu p_{i\mu}=\Lambda\widetilde{U}^\mu \Lambda p_{i\mu}=\Lambda\widetilde{U}^\mu P_{i\mu}=cP^0_i,
		\]
		where the last identity follows from \eqref{c000}.
		Now, we observe from \eqref{P^0} that
		\begin{align*}
		\int_{\mathbb{R}^3}P^\mu_i e^{-c\widetilde{\beta}P_i^0} \,\frac{dP_i}{P_i^0}&=\left(\int_{\mathbb{R}^3} e^{-c\widetilde{\beta}\sqrt{(cm_i)^2+\left|P_i\right|^2}} \,dp_i,\int_{\mathbb{R}^3}P_i e^{-c\widetilde{\beta}\sqrt{(cm_i)^2+\left|P_i\right|^2}} \,\frac{dP_i}{\sqrt{(cm_i)^2+|P_i|^2}}\right)\cr
		&=\left(\int_{\mathbb{R}^3} e^{-c\widetilde{\beta}\sqrt{(cm_i)^2+\left|P_i\right|^2}} \,dP_i,0\right)
		\end{align*}
		which gives
		\begin{equation*}
		\Lambda^{-1}\int_{\mathbb{R}^3}P^\mu_i e^{-c\widetilde{\beta}P_i^0} \,\frac{dP_i}{P_i^0}=\left(\frac{1}{c}\int_{\mathbb{R}^3} e^{-c\widetilde{\beta}\sqrt{(cm_i)^2+\left|P_i\right|^2}} \,dP_i \right) \widetilde{U}^\mu.
		\end{equation*}
		Inserting this into \eqref{constraint3}, we get
		\begin{equation}\label{constraint4}
		\widetilde{U}^\mu=\left(\sum_{i=1}^{N}\frac{m_i}{\tau_i }\frac{\int_{\mathbb{R}^3} e^{-c\widetilde{\beta}p_i^0} \,dp_i }{\int_{\mathbb{R}^3} e^{-c\widetilde{\beta}p_i^0} \,\frac{dp_i}{p_i^0}}\int_{\mathbb{R}^3}f_i \,\frac{dp_i}{p_i^0}\right)^{-1}\sum_{i=1}^{N}\frac{m_i}{\tau_i }n_iU^\mu_i,
		\end{equation}
		and hence
		\begin{align*}
		c^2= \widetilde{U}^\mu \widetilde{U}_{\mu}&=\left(\sum_{i=1}^{N}\frac{m_i}{\tau_i }\frac{\int_{\mathbb{R}^3} e^{-c\widetilde{\beta}p_i^0} \,dp_i }{\int_{\mathbb{R}^3} e^{-c\widetilde{\beta}p_i^0} \,\frac{dp_i}{p_i^0}}\int_{\mathbb{R}^3}f_i \,\frac{dp_i}{p_i^0}\right)^{-2}\left(\sum_{i=1}^{N}\frac{m_i}{\tau_i }n_iU^\mu_i\right)\left( \sum_{i=1}^{N}\frac{m_i}{\tau_i }n_iU_{i\mu}\right).
		\end{align*}
		Therefore, we find the following identity:
		\begin{align}\label{relation for beta}
		\sum_{i=1}^{N}\frac{m_i}{\tau_i}\frac{\int_{\mathbb{R}^3} e^{-c\widetilde{\beta}p_i^0}\,dp_i}{\int_{\mathbb{R}^3} e^{-c\widetilde{\beta}p_i^0}\frac{dp_i}{p_i^0}}\int_{\mathbb{R}^3}f_i \,\frac{dp_i}{p_i^0}=\frac{1}{c}\left[\left(\sum_{i=1}^{N}\frac{m_i}{\tau_i }n_iU^\mu_i\right)\left( \sum_{i=1}^{N}\frac{m_i}{\tau_i }n_iU_{i\mu}\right)\right]^{1/2}.
		\end{align}
		
		\noindent (3) Finally, we combine \eqref{relation for beta} and \eqref{constraint4} to obtain the explicit form of $\widetilde{U}^\mu$:
		\begin{align*}
		\widetilde{U}^\mu=c\frac{\sum_{i=1}^{N}\frac{m_i}{\tau_i}n_iU_i^\mu}{\left[\left(\sum_{i=1}^{N}\frac{m_i}{\tau_i }n_iU^\mu_i\right)\left( \sum_{i=1}^{N}\frac{m_i}{\tau_i }n_iU_{i\mu}\right)\right]^{1/2}}.
		\end{align*}
		\end{proof}
	We now show that we can find a unique $\widetilde{\beta}$ satisfying the relation in Proposition \ref{JF} (2).
	\begin{proposition}\label{JF2} Under the same assumption in Proposition \ref{JF}, there exists a unique $\widetilde{\beta}$ that satisfies the nonlinear relation in Proposition \ref{JF} (2).
	\end{proposition}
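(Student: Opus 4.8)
The plan is to observe that, once Proposition~\ref{JF} has reduced the whole determination problem to the single scalar equation $G(\widetilde{\beta})=C$, the statement becomes an elementary monotonicity-plus-intermediate-value argument, the only genuine work being an inequality that places the target value $C$ strictly inside the range of $G$. Throughout I abbreviate $b_i:=\int_{\mathbb{R}^3}f_i\,dp_i/p_i^0>0$ (positive since $f_i\ge 0$ is not a.e.\ zero), $c_i:=m_i/\tau_i>0$, and I write the left-hand side of Proposition~\ref{JF}(2) as
\[
G(\widetilde{\beta})=\sum_{i=1}^{N}c_i\,b_i\,R_i(\widetilde{\beta}),\qquad R_i(\widetilde{\beta}):=\frac{\int_{\mathbb{R}^3}e^{-c\widetilde{\beta}p_i^0}\,dp_i}{\int_{\mathbb{R}^3}e^{-c\widetilde{\beta}p_i^0}\,dp_i/p_i^0},
\]
while the right-hand side $C:=\frac{1}{c}[(\sum_i c_i n_i U_i^\mu)(\sum_j c_j n_j U_{j\mu})]^{1/2}$ is a fixed positive constant.

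First I would study each $R_i$. Writing $R_i(\widetilde{\beta})=\langle p_i^0\rangle_{\widetilde{\beta}}$ as the mean of $p_i^0$ against the Gibbs measure $e^{-c\widetilde{\beta}p_i^0}\,dp_i/p_i^0$, differentiation in $\widetilde{\beta}$ yields $-c$ times the variance of $p_i^0$, so $R_i$ is smooth and strictly decreasing on $(0,\infty)$. For the two endpoints I would either argue by concentration of the Gibbs measure at the minimum $p_i^0=cm_i$ (giving $R_i\to cm_i$ as $\widetilde{\beta}\to\infty$) and by its spreading as $\widetilde{\beta}\to 0^+$ (giving $R_i\to\infty$), or, more cleanly, use the explicit representation $R_i(\widetilde{\beta})=cm_i\,K_2(c^2 m_i\widetilde{\beta})/K_1(c^2 m_i\widetilde{\beta})$ together with the standard asymptotics of the Bessel ratio $K_2/K_1$. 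Either way $R_i\colon(0,\infty)\to(cm_i,\infty)$ is a continuous decreasing bijection, so $G$ is continuous and strictly decreasing with $\lim_{\widetilde{\beta}\to 0^+}G=+\infty$ and $\lim_{\widetilde{\beta}\to\infty}G=c\sum_i c_i m_i b_i$. By the intermediate value theorem and strict monotonicity, the equation $G(\widetilde{\beta})=C$ then has a solution, and it is unique, precisely when $C>c\sum_i c_i m_i b_i$.

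The crux is therefore the strict inequality $C>c\sum_i c_i m_i b_i$, and I expect this to be \textbf{the main obstacle}. I would introduce the four-vectors $a_i^\mu:=\int_{\mathbb{R}^3}p_i^\mu f_i\,dp_i/p_i^0=c^{-1}n_i U_i^\mu$, each future-directed since $a_i^0=\int_{\mathbb{R}^3}f_i\,dp_i>0$. The engine is the reverse (Lorentzian) integral inequality for future-directed causal vectors: since every $p_i^\mu$ on the mass shell satisfies $\sqrt{p_i^\mu p_{i\mu}}=cm_i$, one gets $\sqrt{a_i^\mu a_{i\mu}}\ge\int_{\mathbb{R}^3}\sqrt{p_i^\mu p_{i\mu}}\,f_i\,dp_i/p_i^0=cm_i b_i$, which in particular shows $a_i^\mu$ is timelike, and the inequality is strict because the $L^1$ function $f_i$ is not concentrated at a single momentum. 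Combining this with the reverse Cauchy--Schwarz inequality $a_i^\mu a_{j\mu}\ge\sqrt{a_i^\mu a_{i\mu}}\sqrt{a_j^\mu a_{j\mu}}\ge c^2 m_i m_j b_i b_j$ for future-directed timelike vectors, and noting $C^2=\sum_{i,j}c_i c_j\,a_i^\mu a_{j\mu}$ while $(c\sum_i c_i m_i b_i)^2=c^2\sum_{i,j}c_i c_j m_i m_j b_i b_j$, a term-by-term subtraction gives $C^2-(c\sum_i c_i m_i b_i)^2=\sum_{i,j}c_i c_j(a_i^\mu a_{j\mu}-c^2 m_i m_j b_i b_j)\ge 0$, strictly positive because every summand is nonnegative and the diagonal ones are strictly positive. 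This yields $C>c\sum_i c_i m_i b_i$ and closes the argument; the same computation also re-confirms that $C$ is real and positive, so that the square root defining it, already used in Proposition~\ref{JF}, is meaningful.
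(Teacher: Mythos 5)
Your proposal is correct and follows essentially the same route as the paper's proof: show the left-hand side is strictly decreasing in $\widetilde{\beta}$ (your variance computation is the paper's H\"older-inequality step in probabilistic clothing), identify its range as $\bigl(c\sum_i \frac{m_i^2}{\tau_i}\int f_i\,\frac{dp_i}{p_i^0},\,\infty\bigr)$, and then place the right-hand side strictly inside that range using the reverse Cauchy--Schwarz inequality for future-directed timelike vectors together with strictness coming from the fact that an integrable $f_i$ cannot concentrate at a single momentum. The only differences are presentational --- you invoke Bessel asymptotics and the integral reverse Minkowski inequality where the paper does explicit integration by parts and the pointwise bounds $p_i^\mu U_{i\mu}\ge m_ic^2$, $U_i^\mu U_{j\mu}\ge c^2$ --- so the two arguments coincide in substance.
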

	
	\begin{proof}
		For brevity, we  denote
		$$
		M_i(\widetilde{\beta})=\int_{\mathbb{R}^3} e^{-c\widetilde{\beta}p_i^0}\,dp_i,\qquad  \widetilde{M}_i(\widetilde{\beta})=\int_{\mathbb{R}^3} e^{-c\widetilde{\beta}p_i^0}\frac{dp_i}{p_i^0}.
		$$  
		From this notation, we rewrite the nonlinear relation in Proposition \ref{JF} (2) as
		\begin{align}\label{beta2}
		\sum_{i=1}^{N}\frac{m_i}{\tau_i}\frac{ M_i(\widetilde{\beta})}{ \widetilde{M}_i(\widetilde{\beta})}\int_{\mathbb{R}^3}f_i \,\frac{dp_i}{p_i^0}=\frac{1}{c}\left[\left(\sum_{i=1}^{N}\frac{m_i}{\tau_i }n_iU^\mu_i\right)\left( \sum_{i=1}^{N}\frac{m_i}{\tau_i }n_iU_{i\mu}\right)\right]^{1/2}.
		\end{align}
		Since
		$$
		\frac{d}{d\widetilde{\beta}} M_i(\widetilde{\beta})=-c\int_{\mathbb{R}^3} p_i^0e^{-c\widetilde{\beta}p_i^0}\,dp_i,\qquad \frac{d}{d\widetilde{\beta}} \widetilde{M}_i(\widetilde{\beta})=-c\int_{\mathbb{R}^3} e^{-c\widetilde{\beta}p_i^0}\,dp_i,
		$$
		we have from the H\"{o}lder inequality that
		\begin{align*}
		\frac{d}{d\widetilde{\beta}}\left\{\frac{ M_i(\widetilde{\beta})}{ \widetilde{M}_i(\widetilde{\beta})}  \right\}&=c\frac{\left(\int_{\mathbb{R}^3} e^{-c\widetilde{\beta}p_i^0}\,dp_i\right)^2-\int_{\mathbb{R}^3} p_i^0e^{-c\widetilde{\beta}p_i^0}\,dp_i\int_{\mathbb{R}^3} e^{-c\widetilde{\beta}p_i^0}\,\frac{dp_i}{p_i^0}}{\left(\widetilde{M}_i(\widetilde{\beta})\right)^2}
		< 0
		\end{align*}
		which says that $M_i/\widetilde{M}_i$ is strictly decreasing on $\widetilde{\beta}\in (0,\infty)$ for all $i=1,\cdots,N$. Moreover, $M_i/\widetilde{M}_i$ goes to $\infty$ as $\widetilde{\beta}$ approaches $0$ since
		\begin{align*}
		\frac{ M_i(\widetilde{\beta})}{ \widetilde{M}_i(\widetilde{\beta})}&=\frac{\int_{0}^\infty r^2 e^{-c\widetilde{\beta}\sqrt{(cm_i)^2+r^2}}\,dr}{\int_{0}^\infty \frac{r^2}{\sqrt{(cm_i)^2+r^2}} e^{-c\widetilde{\beta}\sqrt{(cm_i)^2+r^2}}\,dr}\cr
		&=\frac{1}{c\widetilde{\beta}}\frac{\int_{0}^\infty \frac{(cm_i)^2+2r^2}{\sqrt{(cm_i)^2+r^2}} e^{-c\widetilde{\beta}\sqrt{(cm_i)^2+r^2}}\,dr}{\int_{0}^\infty \frac{r^2}{\sqrt{(cm_i)^2+r^2}} e^{-c\widetilde{\beta}\sqrt{(cm_i)^2+r^2}}\,dr}\cr
		&\ge \frac{1}{c\widetilde{\beta}}
		\end{align*}
		where we used spherical coordinates and integration by parts. In a similar way, one can find that $M_i/\widetilde{M}_i\rightarrow cm_i$ as $\widetilde{\beta}\rightarrow \infty$:
		\begin{align*}
		cm_i<\frac{ M_i(\widetilde{\beta})}{ \widetilde{M}_i(\widetilde{\beta})}&=\frac{\int_{0}^\infty r^2 e^{-c\widetilde{\beta}\sqrt{(cm_i)^2+r^2}}\,dr}{\int_{0}^\infty \frac{r^2}{\sqrt{(cm_i)^2+r^2}} e^{-c\widetilde{\beta}\sqrt{(cm_i)^2+r^2}}\,dr}\cr
		&= 2\frac{\int_{0}^\infty \frac{r^2}{\sqrt{(cm_i)^2+r^2}} e^{-c\widetilde{\beta}\sqrt{(cm_i)^2+r^2}}\,dr}{\int_{0}^\infty  e^{-c\widetilde{\beta}\sqrt{(cm_i)^2+r^2}}\,dr}+\frac{\int_{0}^\infty \frac{(cm_i)^2}{\sqrt{(cm_i)^2+r^2}} e^{-c\widetilde{\beta}\sqrt{(cm_i)^2+r^2}}\,dr}{\int_{0}^\infty  e^{-c\widetilde{\beta}\sqrt{(cm_i)^2+r^2}}\,dr}\cr
		&\le \frac{2}{c\widetilde{\beta}}+cm_i.
		\end{align*}
		From these observations, we conclude that the left-hand side of \eqref{beta2} is strictly decreasing on $\widetilde{\beta}\in(0,\infty)$ and its range is 
		\begin{equation*}
		\left(\sum_{i=1}^{N}\frac{c(m_i)^2}{\tau_i}\int_{\mathbb{R}^3}f_i \,\frac{dp_i}{p_i^0},~\infty\right).
		\end{equation*} 
		Next, we show that the range of the r.h.s. of \eqref{beta2} lies in this open interval. For this, we observe from the Cauchy-Schwartz inequality that 
		$$
		p_i^\mu U_{i\mu}=\sqrt{(cm_i)^2+|p_i|^2}\sqrt{c^2+|U_i|^2}-p_i\cdot U_i \ge m_ic^2,
		$$
		and
		\begin{equation*}
				U_i^\mu U_{j \mu}=\sqrt{c^2+|U_i|^2}\sqrt{c^2+|U_j|^2} -U_i\cdot U_j\ge c^2,
		\end{equation*}
		to  bound the right-hand side of \eqref{beta2} from below as follows:
		\begin{align*}
		\frac{1}{c}\left[\left(\sum_{i=1}^{N}\frac{m_i}{\tau_i }n_iU^\mu_i\right)\left( \sum_{i=1}^{N}\frac{m_i}{\tau_i }n_iU_{i\mu}\right)\right]^{1/2}
		&=\frac{1}{c}\left[\sum_{i,j=1}^N(2-\delta_{ij})\frac{m_im_j}{\tau_i\tau_j}n_in_jU_i^\mu U_{j\mu}\right]^{1/2}\\
		&\geq\left[\sum_{i,j=1}^N(2-\delta_{ij})\frac{m_im_j}{\tau_i\tau_j}n_in_j\right]^{1/2}\\
		&=\sum_{i=1}^N\frac{m_i }{\tau_i}n_i\cr
		&=\sum_{i=1}^N\frac{m_i }{c\tau_i}\int_{\mathbb{R}^3}p_i^\mu U_{i\mu} f_i\,\frac{dp_i}{p_i^0}\cr
		&>\sum_{i=1}^N\frac{c(m_i)^2 }{\tau_i}\int_{\mathbb{R}^3} f_i\,\frac{dp_i}{p_i^0}.
		\end{align*}
	In the last line, we used the fact that $f_i$ is a integrable function so that $f_i$ does not concentrate at $p_i=m_iU_i$. Otherwise 
	  the strict inequality in the last line is not guaranteed. 
		Therefore,  we conclude that there is a one-to-one correspondence which guarantees the unique existence of $\widetilde{\beta}$ satisfying the nonlinear relation given in
		Proposition \ref{JF} (2).		
	\end{proof}
The following is the main result of this section, which follows directly from Proposition \ref{JF} and Proposition \ref{JF2}:
	\begin{theorem}\label{JF3} Under the same assumption in Proposition \ref{JF},
	the system of equations (\ref{eq2}) uniquely determines the J\"{u}ttner distribution $\mathcal{J}_i$ by	
	\begin{align*}
	\mathcal{J}_i=\frac{\int_{\mathbb{R}^3} f_i \frac{dp_i}{p_i^0}}{\int_{\mathbb{R}^3} e^{-c\widetilde{\beta} p_i^0} \frac{dp_i}{p_i^0}}\exp\left(-\widetilde{\beta}\widetilde{U}^\mu p_{i\mu}\right)
	\end{align*}
	for $i=1,\cdots,N$. Here the equilibrium parameter $\widetilde{U}^\mu$ is given in Proposition \ref{JF} (3) and $\widetilde{\beta}$ is determined 
	by the nonlinear relation in Proposition \ref{JF} (2).	
	\end{theorem}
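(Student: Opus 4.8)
The plan is to assemble the closed-form expression by combining the three conclusions of Proposition \ref{JF} with the existence-and-uniqueness result of Proposition \ref{JF2}, so that essentially no new computation is needed. First I would recall the defining form of the attractor in \eqref{Juttner},
$$
\mathcal{J}_i=\frac{g_{s_{i}}}{h^3}\exp\left(\widetilde{\beta}\widetilde{\mu}_i\right)\exp\left(-\widetilde{\beta}\widetilde{U}^\mu p_{i \mu }\right),
$$
and substitute the head part $\tfrac{g_{s_i}}{h^3}e^{\widetilde{\beta}\widetilde{\mu}_i}$ determined in Proposition \ref{JF} (1). This immediately gives
$$
\mathcal{J}_i=\frac{\int_{\mathbb{R}^3}f_i \,\frac{dp_i}{p_i^0}}{\int_{\mathbb{R}^3} e^{-\widetilde{\beta}\widetilde{U}^\mu p_{i\mu}} \,\frac{dp_i}{p_i^0}}\exp\left(-\widetilde{\beta}\widetilde{U}^\mu p_{i \mu }\right),
$$
so the only discrepancy from the asserted formula is that the denominator carries $\widetilde{U}^\mu p_{i\mu}$ rather than $c\,p_i^0$.

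Second, I would reconcile the denominator using the Lorentz transformation $\Lambda$ introduced in \eqref{Lorentz transform}. Setting $P_i^\mu=\Lambda p_i^\mu$ and invoking both the invariance of the volume element $dp_i/p_i^0$ and the identity $\widetilde{U}^\mu p_{i\mu}=cP_i^0$ that follows from \eqref{c000}, the denominator transforms as
$$
\int_{\mathbb{R}^3} e^{-\widetilde{\beta}\widetilde{U}^\mu p_{i\mu}} \,\frac{dp_i}{p_i^0}=\int_{\mathbb{R}^3} e^{-c\widetilde{\beta}P_i^0} \,\frac{dP_i}{P_i^0}=\int_{\mathbb{R}^3} e^{-c\widetilde{\beta}p_i^0} \,\frac{dp_i}{p_i^0},
$$
which is exactly the form in the statement. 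Substituting this back yields the claimed expression for $\mathcal{J}_i$.

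Finally, for the uniqueness assertion I would observe that the attractor is completely specified once $\widetilde{U}^\mu$ and $\widetilde{\beta}$ are fixed: the equilibrium four-velocity $\widetilde{U}^\mu$ is given explicitly in terms of $n_i$ and $U_i^\mu$ by Proposition \ref{JF} (3), while $\widetilde{\beta}$ is the unique root of the nonlinear relation in Proposition \ref{JF} (2), whose existence and uniqueness are guaranteed by Proposition \ref{JF2}. Since the head part is then pinned down by Proposition \ref{JF} (1), every coefficient entering $\mathcal{J}_i$ is uniquely determined by the constraints \eqref{eq2}. I do not expect any genuine obstacle: the substance of the theorem is entirely packaged in the two preceding propositions, and the lone bookkeeping step---rewriting the denominator---is a verbatim repetition of the Lorentz change of variables already performed in the proof of Proposition \ref{JF} (2).
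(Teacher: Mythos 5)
Your proposal is correct and follows exactly the route the paper intends: the paper offers no separate proof, stating that the theorem ``follows directly from Proposition \ref{JF} and Proposition \ref{JF2},'' and your argument simply makes this explicit --- substituting the head part from Proposition \ref{JF} (1), rewriting the denominator via the Lorentz change of variables already used in the proof of Proposition \ref{JF} (2), and invoking Proposition \ref{JF2} together with Proposition \ref{JF} (3) for uniqueness of $\widetilde{\beta}$ and $\widetilde{U}^\mu$. No gaps; this is the same argument, spelled out.
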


	Before finishing this section, we recall the J\"{u}ttner distribution $\mathcal{J}$ for the Marle type relativistic BGK model for single-component gases \cite{BCNS, HY}:
	\begin{equation}\label{single}
	\mathcal{J}=\frac{\int_{\mathbb{R}^3}f \,\frac{dp}{p^0}}{\int_{\mathbb{R}^3} e^{- \frac{\beta}{mc} p^0}\,\frac{dp}{p^0}}e^{-\frac{\beta}{mc^2} U^\mu p_{\mu}},
	\end{equation}
	where $\beta$ is determined by the relation
	\begin{equation*} 
	\frac{\int_{\mathbb{R}^3} e^{-\frac{\beta}{mc} p^0}\,dp}{\int_{\mathbb{R}^3} e^{-\frac{\beta}{mc} p^0}\,\frac{dp}{p^0}}=\frac{n}{\int_{\mathbb{R}^3}f \,\frac{dp}{p^0}}.
	\end{equation*}
	Using the relation  $\beta=mc^2/kT$ and the modified Bessel function of the second kind:
	\begin{equation*} 
	K_i(\beta)=\int_0^\infty \cosh(ir)e^{-\beta \cosh(r)}\,dr,
	\end{equation*}
	we can rewrite \eqref{single}  as follows:
	\begin{align}\label{single J}\begin{split}
	\mathcal{J}= \frac{n}{4\pi m^2ckTK_2(\beta)}e^{-\frac{\beta}{mc^2} U^\mu p_{\mu}},
	\end{split}	\end{align}
	which is also widely used in relativistic kinetic theory. In the multi-species case, however, it turns out that the corresponding expression of $\mathcal{J}_i$ in the spirit of \eqref{single J} is extremely complicated:
	$$
	\mathcal{J}_i=\frac{\tau_i}{m_i}\frac{\frac{1}{c}\left[\left( \sum_{i=1}^{N}\frac{m_i }{\tau_i}n_iU_i^\mu\right)\left( \sum_{j=1}^{N}\frac{m_j }{\tau_j}n_jU_{j\mu}\right)\right]^{\frac{1}{2}}-\sum_{j\neq i}^{N}\frac{m_j}{\tau_j}\frac{\int_{\mathbb{R}^3} e^{-c\widetilde{\beta}p_j^0}\,dp_j}{\int_{\mathbb{R}^3} e^{-c\widetilde{\beta}p_j^0}\,\frac{dp_j}{p_j^0}}\int_{\mathbb{R}^3}f_j \,\frac{dp_j}{p_j^0} }{\int_{\mathbb{R}^3} e^{-c\widetilde{\beta}p_i^0}\,dp_i }e^{-\frac{\widetilde{U}^\mu p_{i \mu }}{k\widetilde{T}}}.
	$$
	which does not seem to have any advantage over the much simpler representation given in Theorem \ref{JF3}.

\section{Properties} 
In this section, we show that our relaxation model defined in the previous section satisfies all the crucial properties of the Boltzmann equation presented in Section 3.
\begin{theorem} Under the same assumption in Proposition \ref{JF}, our model \eqref{Marle} satisfies the following properties:
\begin{enumerate}

\item {\bf Conservation laws} The following relations hold:
\begin{align*}
	\begin{split}
		&\int_{\mathbb{R}^3} Q_i\, dp_i=0 \quad (i=1,\cdots,N) \qquad \sum_{i=1}^{N}\int_{\mathbb{R}^3} p_i^\mu Q_i\,dp_i=0.
	\end{split}
\end{align*}
Thus, the partial particle four-flow and the energy-momentum tensor of gas mixtures are conserved:
$$
\frac{\partial N_i^\mu}{\partial x^\mu}=0,\qquad \frac{\partial T^{\mu\nu}}{\partial x^\nu}=0.
$$
\item {\bf Equilibria} In global equilibrium, all distributions $f_i$ take the form of the J\"{u}ttner distribution with the same four-velocity and temperature.
\item {\bf H-theorem} The following inequality is satisfied:
\begin{align*}
	\sum_{i=1}^N \int_{\mathbb{R}^3} \widetilde{Q}_i\ln f_i \,dp_i\leq0.
\end{align*}
Therefore, we obtain the H-theorem:
$$
\frac{\partial S^\mu}{\partial x^\mu} \ge 0.
$$
\item {\bf Indifferentiability principle} If all constituents have the same
mass $m$ and the same relaxation time $\tau$, then the total momentum distribution $f:=\sum_{i=1}^N f_i$ is governed by the Marle model which is the relativistic BGK model for single-species gases \cite{Marle}.

 \end{enumerate}
\end{theorem}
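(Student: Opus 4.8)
The plan is to establish the four properties in turn, leaning on the construction of the equilibrium coefficients already secured in Proposition~\ref{JF}, Proposition~\ref{JF2}, and Theorem~\ref{JF3}. For the \textbf{conservation laws}, the key observation is that these are built into the model: the defining system \eqref{eq2} \emph{is} precisely the pair $\int_{\mathbb{R}^3}\widetilde{Q}_i\,dp_i=0$ and $\sum_i\int_{\mathbb{R}^3}p_i^\mu\widetilde{Q}_i\,dp_i=0$, and Theorem~\ref{JF3} guarantees it is solvable. To reach the macroscopic statements I would multiply \eqref{Marle} by $p_i^0/c$ to get the covariant form $p_i^\mu\partial_{x^\mu}f_i=(m_i/\tau_i)(\mathcal{J}_i-f_i)$; integrating against $c\,dp_i/p_i^0$ yields $\partial_{x^\mu}N_i^\mu=\int_{\mathbb{R}^3}\widetilde{Q}_i\,dp_i=0$, and integrating against $c\,p_i^\nu\,dp_i/p_i^0$ and summing over $i$ yields $\partial_{x^\mu}T^{\mu\nu}=\sum_i\int_{\mathbb{R}^3}p_i^\nu\widetilde{Q}_i\,dp_i=0$. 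The \textbf{equilibria} property then follows at once: $\widetilde{Q}_i=0$ for every $i$ forces $f_i=\mathcal{J}_i$, and $\mathcal{J}_i$ in \eqref{Juttner} is by design a J\"{u}ttner distribution sharing the common four-velocity $\widetilde{U}^\mu$ and common temperature $1/k\widetilde{\beta}$; one only records the self-consistency that each such $\mathcal{J}_i$ has Eckart four-velocity $\widetilde{U}^\mu$, so that the formula of Proposition~\ref{JF}(3) reproduces $\widetilde{U}^\mu$.

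The analytic core is the \textbf{H-theorem}. I would start from $\sum_i\int_{\mathbb{R}^3}\widetilde{Q}_i\ln f_i\,dp_i=\sum_i\frac{cm_i}{\tau_i}\int_{\mathbb{R}^3}(\mathcal{J}_i-f_i)\ln f_i\,\frac{dp_i}{p_i^0}$ and invoke the pointwise inequality $(\mathcal{J}_i-f_i)(\ln f_i-\ln\mathcal{J}_i)\le 0$ (valid because $\ln$ is increasing, so the two factors have opposite signs) to bound the sum above by $\sum_i\frac{cm_i}{\tau_i}\int_{\mathbb{R}^3}(\mathcal{J}_i-f_i)\ln\mathcal{J}_i\,\frac{dp_i}{p_i^0}$. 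The decisive point is that $\ln\mathcal{J}_i=\big(\ln\frac{g_{s_i}}{h^3}+\widetilde{\beta}\widetilde{\mu}_i\big)-\widetilde{\beta}\widetilde{U}^\mu p_{i\mu}$ is an affine combination of the collision invariants $1$ and $p_i^\mu$: the constant head term integrates to zero by the per-species identity $\int_{\mathbb{R}^3}(\mathcal{J}_i-f_i)\frac{dp_i}{p_i^0}=0$, while the momentum term vanishes upon summation by the second identity of \eqref{eq2}. Hence the upper bound is exactly $0$, which is the stated inequality, and the differential H-theorem follows by computing $\partial_{x^\mu}S^\mu=-k\sum_i\int_{\mathbb{R}^3}\widetilde{Q}_i\ln f_i\,dp_i\ge 0$, the stray constants $1$ and $\ln(h^3/g_{s_i})$ dropping out once more by number conservation.

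For the \textbf{indifferentiability principle}, I would set $m_i=m$, $\tau_i=\tau$ and sum \eqref{Marle} over $i$; since all species now share the energy $p^0=\sqrt{(cm)^2+|p|^2}$, this gives $\partial_t f+\frac{cp}{p^0}\cdot\nabla_x f=\frac{cm}{\tau p^0}\big(\sum_i\mathcal{J}_i-f\big)$. In the representation of Theorem~\ref{JF3} the common mass makes both the denominator $\int_{\mathbb{R}^3}e^{-c\widetilde{\beta}p^0}\frac{dp}{p^0}$ and the exponential $e^{-\widetilde{\beta}\widetilde{U}^\mu p_\mu}$ independent of $i$, so $\sum_i\mathcal{J}_i=\big(\int_{\mathbb{R}^3}f\frac{dp}{p^0}\big)\big/\big(\int_{\mathbb{R}^3}e^{-c\widetilde{\beta}p^0}\frac{dp}{p^0}\big)\,e^{-\widetilde{\beta}\widetilde{U}^\mu p_\mu}$. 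It then remains to identify $\widetilde{U}^\mu$ and $\widetilde{\beta}$ with the single-species parameters of \eqref{single}: Proposition~\ref{JF}(3) collapses to $\widetilde{U}^\mu=c\,N^\mu/(N^\nu N_\nu)^{1/2}=U^\mu$ because $\sum_i n_iU_i^\mu=N^\mu=nU^\mu$ has length $c$, and Proposition~\ref{JF}(2) collapses to the scalar relation $\int_{\mathbb{R}^3}e^{-c\widetilde{\beta}p^0}\,dp\big/\int_{\mathbb{R}^3}e^{-c\widetilde{\beta}p^0}\frac{dp}{p^0}=n\big/\int_{\mathbb{R}^3}f\frac{dp}{p^0}$, which is exactly the single-species determination of $\beta$ under $c\widetilde{\beta}=\beta/(mc)$. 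Uniqueness from Proposition~\ref{JF2} then forces $\widetilde{\beta}=\beta/(mc^2)$, so $\sum_i\mathcal{J}_i$ coincides with the single-species J\"{u}ttner attractor and $f$ obeys the Marle model.

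I expect the main obstacle to be the H-theorem reduction, specifically ensuring the chemical-potential head term of $\ln\mathcal{J}_i$ is annihilated; this hinges on having \emph{per-species} number conservation (the first $N$ equations of \eqref{eq2}) rather than merely total number conservation, and it is exactly this structure that the construction provides. A secondary delicate point is the parameter identification in the indifferentiability step, where the collapse of the nonlinear relation of Proposition~\ref{JF}(2) to the single-species relation, combined with the uniqueness of Proposition~\ref{JF2}, is what pins down $\widetilde{\beta}$; the remaining manipulations are routine moment computations.
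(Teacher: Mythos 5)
Your proposal is correct, and for the conservation laws, the H-theorem, and the indifferentiability principle it is essentially the paper's own argument: conservation is read off directly from the defining constraints \eqref{eq2}; your H-theorem rests on the same two ingredients as the paper's, namely the identity $\sum_{i}\int_{\mathbb{R}^3}\widetilde{Q}_i\ln\mathcal{J}_i\,dp_i=0$ (per-species number conservation killing the head term, the summed momentum constraint killing the $\widetilde{\beta}\widetilde{U}_\mu p_i^\mu$ term) and the pointwise inequality $(\mathcal{J}_i-f_i)(\ln f_i-\ln\mathcal{J}_i)\le 0$, which is the paper's $(1-x)\ln x\le 0$ in a slightly reorganized order; and your indifferentiability argument (summing \eqref{Marle}, collapsing Proposition \ref{JF}(3) to $\widetilde{U}^\mu=U^\mu$ and Proposition \ref{JF}(2) to the single-species relation under $\beta=mc^2\widetilde{\beta}$, then invoking uniqueness) matches the paper's computation step by step. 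The one genuine divergence is the Equilibria part: you argue pointwise that $\widetilde{Q}_i\equiv 0$ forces $f_i=\mathcal{J}_i$, which already carries the common parameters $\widetilde{U}^\mu$ and $\widetilde{\beta}$ by the very form \eqref{Juttner}; this is shorter than the paper's treatment and does prove the statement as written. The paper instead works with the integrated conditions \eqref{equilibria} and, using the Lorentz transformation \eqref{Lorentz transform}, the constant-length normalization $U_i^\mu U_{i\mu}=c^2=\widetilde{U}^\mu\widetilde{U}_\mu$, and the strict monotonicity of $K_1/K_2$, establishes $U_i^\mu=\widetilde{U}^\mu$ and $T_i=\widetilde{T}$ for every species. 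What that longer route buys is precisely the ``self-consistency'' you mention but do not verify: the common equilibrium parameters are not merely formal coefficients of the attractor but coincide with each species' own Eckart four-velocity and Marle temperature, so that ``the same four-velocity and temperature'' is a statement about the physical macroscopic fields of the $f_i$ themselves, not only about the labels appearing in \eqref{Juttner}.
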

The remaining part of this section is devoted to the proof of these properties.

\subsection{Conservation laws}
Recall that the equilibrium coefficients of our model are determined in a way that the following conservation laws hold:
\begin{equation}\label{cancellation} 
\frac{cm_i}{\tau_i}\int_{\mathbb{R}^3} \mathcal{J}_i-f_i \,\frac{dp_i}{p_i^0}=0,\qquad \sum_{i=1}^N \frac{cm_i}{\tau_i}\int_{\mathbb{R}^3} p_i^\mu \left(\mathcal{J}_i-f_i\right) \,\frac{dp_i}{p_i^0}=0.
\end{equation}
Multiplying \eqref{Marle} by $(1,p_i^\mu)$ and integrating over $p_i\in\mathbb{R}^3$, the identities in \eqref{cancellation} lead to the following relations
 	\begin{align*}
 	\int_{\mathbb{R}^3} \partial_{t} f_i+\frac{cp_i}{p_i^0}\cdot \nabla_x f_i\,dp_i=0,\qquad \sum_{i=1}^N\int_{\mathbb{R}^3}p_i^\mu\Big( \partial_{t} f_i+\frac{cp_i}{p_i^0}\cdot \nabla_x f_i\Big)\,dp_i=0.
 \end{align*}
Recalling the definition of $N_i^\mu$ and $T^{\mu\nu}$, this implies the desired conservation laws:
$$
\frac{\partial N_i^\mu}{\partial x^\mu}=0,\qquad \frac{\partial T^{\mu\nu}}{\partial x^\nu}=0,\qquad\forall i=1,\cdots, N.
$$

\subsection{Equilibria} To find the global equilibrium, assume $\widetilde{Q}_i\equiv0$ which yields
\begin{equation}\label{equilibria}
	c\int_{\mathbb{R}^3} \left(\mathcal{J}_i-f_i \right)\,\frac{dp_i}{p_i^0}=0,\qquad c\int_{\mathbb{R}^3} p_i^\mu \left(\mathcal{J}_i-f_i\right) \,\frac{dp_i}{p_i^0}=0,
\end{equation}
for $i=1,\cdots,N$.
Observe from \eqref{eckart} that
$$
c\int_{\mathbb{R}^3} p_i^\mu  f_i  \,\frac{dp_i}{p_i^0}=n_iU_i^\mu.
$$
Using the Lorentz transformation \eqref{Lorentz transform}, one finds
\begin{align*}
	c\int_{\mathbb{R}^3} p_i^\mu  \mathcal{J}_i  \,\frac{dp_i}{p_i^0}&=c\frac{\int_{\mathbb{R}^3} f_i \frac{dp_i}{p_i^0}}{\int_{\mathbb{R}^3} e^{-c\widetilde{\beta} p_i^0} \frac{dp_i}{p_i^0}}\int_{\mathbb{R}^3} p_i^\mu  e^{-\widetilde{\beta}\widetilde{U}^\mu p_{i\mu}}  \,\frac{dp_i}{p_i^0}\cr
	&=c\frac{\int_{\mathbb{R}^3} f_i \frac{dp_i}{p_i^0}}{\int_{\mathbb{R}^3} e^{-c\widetilde{\beta} p_i^0} \frac{dp_i}{p_i^0}}\Lambda^{-1}\int_{\mathbb{R}^3} p_i^\mu  e^{-c\widetilde{\beta} p_{i}^0}  \,\frac{dp_i}{p_i^0}.
\end{align*}
Since the spatial parts $(\mu=1,2,3)$ in the integral vanish due to the oddness, it reduces to
\begin{align*}
	c\int_{\mathbb{R}^3} p_i^\mu  \mathcal{J}_i  \,\frac{dp_i}{p_i^0}&=\frac{\int_{\mathbb{R}^3} f_i \frac{dp_i}{p_i^0}}{\int_{\mathbb{R}^3} e^{-c\widetilde{\beta} p_i^0} \frac{dp_i}{p_i^0}}\int_{\mathbb{R}^3}  e^{-c\widetilde{\beta} p_{i}^0}  \, dp_i \Lambda^{-1}(c,0,0,0)\cr
	&=\frac{\int_{\mathbb{R}^3} f_i \frac{dp_i}{p_i^0}}{\int_{\mathbb{R}^3} e^{-c\widetilde{\beta} p_i^0} \frac{dp_i}{p_i^0}}\int_{\mathbb{R}^3}  e^{-c\widetilde{\beta} p_{i}^0}  \, dp_i \widetilde{U}^\mu.
\end{align*}
From these observations, we see that the second identity of  \eqref{equilibria} is written as
$$
n_iU_i^\mu=\frac{\int_{\mathbb{R}^3} f_i \frac{dp_i}{p_i^0}}{\int_{\mathbb{R}^3} e^{-c\widetilde{\beta} p_i^0} \frac{dp_i}{p_i^0}}\int_{\mathbb{R}^3}  e^{-c\widetilde{\beta} p_{i}^0}  \, dp_i \widetilde{U}^\mu.
$$
Note that $U_i^\mu$ and $\widetilde{U}^\mu$ have a constant length in the following sense:
$$
U_i^\mu U_{i\mu}=c^2=\widetilde{U}^\mu\widetilde{U}_\mu,
$$
which gives
\begin{equation}\label{conclusion}
	n_i=\frac{\int_{\mathbb{R}^3} f_i \frac{dp_i}{p_i^0}}{\int_{\mathbb{R}^3} e^{-c\widetilde{\beta} p_i^0} \frac{dp_i}{p_i^0}}\int_{\mathbb{R}^3}  e^{-c\widetilde{\beta} p_{i}^0}  \, dp_i,\quad \text{and}\quad U_i^\mu= \widetilde{U}^\mu.
\end{equation}
The second identity of \eqref{conclusion} means that at global equilibrium, all of the four-velocities for $i$ species become identical to $\widetilde{U}^\mu$. To investigate the temperatures for $i$ species, we apply the change of variables $p_i/(m_ic)\rightarrow p_i$ and spherical coordinates to the first identity of \eqref{conclusion} to obtain
\begin{align}\label{temperature}\begin{split}
		\frac{1}{n_i}\int_{\mathbb{R}^3} f_i \frac{dp_i}{p_i^0}&=\frac{\int_{\mathbb{R}^3} e^{-c\widetilde{\beta} p_i^0} \frac{dp_i}{p_i^0}}{\int_{\mathbb{R}^3}  e^{-c\widetilde{\beta} p_{i}^0}  \, dp_i}\cr
		&=\frac{1}{m_ic}\frac{\int_{\mathbb{R}^3} e^{-m_ic^2\widetilde{\beta} \sqrt{1+|p_i|^2}} \frac{dp_i}{\sqrt{1+|p_i|^2}}}{\int_{\mathbb{R}^3}  e^{-m_ic^2\widetilde{\beta} \sqrt{1+|p_i|^2}}  \, dp_i}\cr
		&=\frac{1}{m_ic}\frac{K_1(m_ic^2\widetilde{\beta})}{K_2(m_ic^2\widetilde{\beta})}
\end{split}\end{align}
where  $K_i$ denotes the modified Bessel function of the second kind:
\begin{align}\label{bessel function}
	K_1(\beta)=\int_{0}^\infty e^{-\beta\sqrt{1+|r|^2}}\,dr,\qquad K_2(\beta)=\int_{0}^\infty \frac{2r^2+1}{\sqrt{1+r^2}} e^{-\beta\sqrt{1+|r|^2}}\,dr.
\end{align}
Recall from \cite[eq. (8.4)]{Kremer} that in the Marle's formulation for a single component gas, the ratio $\beta_i:=m_ic^2/kT_i$ is determined by the means of constraints \eqref{equilibria} as follows
$$
\frac{m_ic}{n_i}\int_{\mathbb{R}^3} f_i \frac{dp_i}{p_i^0}=\frac{K_1(\beta_i)}{K_2(\beta_i)},
$$
which combined with \eqref{temperature} gives
$$
\frac{K_1}{K_2}\left(\frac{m_ic^2}{kT_i}\right)=\frac{K_1}{K_2}\left(\frac{m_ic^2}{k\widetilde{T}}\right).
$$
Since it was shown in \cite[Appendix]{BCNS} that $K_1/K_2$ is  strictly monotone, we get 
$$
T_i= \widetilde{T},\qquad i=1,\cdots, N.
$$
At global equilibrium, we thus conclude that for all constituent $i$, the momentum distributions $f_i$  become J\"{u}ttner distributions $\mathcal{J}_i$ sharing the common four-velocity $\widetilde{U}^\mu$ and temperature $\widetilde{T}$.
 
\subsection{H-theorem}  From \eqref{cancellation} we have
\begin{align*}
\sum_{i=1}^N  \int_{\mathbb{R}^3} \widetilde{Q}_i \ln \mathcal{J}_i\, dp_i&=\sum_{i=1}^N  \ln \left(\frac{\int_{\mathbb{R}^3} f_i \frac{dp_i}{p_i^0}}{\int_{\mathbb{R}^3} e^{-c\widetilde{\beta} p_i^0} \frac{dp_i}{p_i^0}}\right)\int_{\mathbb{R}^3} \widetilde{Q}_i \, dp_i-\sum_{i=1}^N \widetilde{\beta}\widetilde{U}_\mu \int_{\mathbb{R}^3} p_i^{\mu} \widetilde{Q}_i  \, dp_i\cr
&=0.
\end{align*}
From this observation, one finds
	\begin{align*}
	\sum_{i=1}^N \int_{\mathbb{R}^3} \widetilde{Q}_i  \ln \left( \frac{f_i h^3}{g_{s_i}}\right) \,dp_i&=\sum_{i=1}^N \int_{\mathbb{R}^3} \widetilde{Q}_i  \ln f_i \,dp_i\cr
	&=  		 		\sum_{i=1}^N \int_{\mathbb{R}^3} \widetilde{Q}_i  \left(\ln f_i -\ln \mathcal{J}_i \right) \,dp_i \cr
	&=   \sum_{i=1}^N \frac{cm_i}{\tau_i}\int_{\mathbb{R}^3} \mathcal{J}_i\left(1-\frac{f_i}{\mathcal{J}_i}\right)  \ln \frac{f_i}{ \mathcal{J}_i} \,\frac{dp_i}{p_i^0}\cr
	&\le 0
	\end{align*}
thanks to the following elementary inequality: 
	$$
	(1-x)\ln x \le 0,\qquad \forall x>0.
	$$
Therefore, the momentum distribution $f_i$ of \eqref{Marle} satisfies
	$$
	\sum_{i=1}^N \partial_t\int_{\mathbb{R}^3} f_i \ln \left( \frac{f_i h^3}{g_{s_i}}\right) \,dp_i+	c\sum_{i=1}^N \text{div} _x\int_{\mathbb{R}^3}p_i f_i \ln \left( \frac{f_i h^3}{g_{s_i}}\right) \,\frac{dp_i}{p_i^0}\le 0,
	$$
i.e. the following entropy inequality holds:
$$
\frac{\partial S^\mu}{\partial x^\mu} = -k\sum_{i=1}^N\partial_t\int_{\mathbb{R}^3}  f_i \ln \left( \frac{f_i h^3}{g_{s_i}}\right) \, dp_i -kc\sum_{i=1}^N\text{div}_x\int_{\mathbb{R}^3}p_i  f_i \ln \left( \frac{f_i h^3}{g_{s_i}}\right) \,\frac{dp_i}{p_i^0}  \ge 0.
$$

\subsection{Indifferentiability principle} Let
\begin{equation}\label{indiff0}
m_1=m_2=\cdots=m_N\ (=m),\qquad \tau_1=\tau_2=\cdots=\tau_N\ (=\tau),
\end{equation}
then all the four-momentum $p_i^\mu$ reduce to
$$
p^\mu=\left(\sqrt{(cm)^2+|p|^2},p\right).
$$
Adding the equation \eqref{Marle} over $i=1,\cdots,N$, we get   
$$
\partial_t f+\frac{cp}{p^0}\cdot\nabla_xf=\frac{cm}{\tau p^0}\Big(\sum_{i=1}^N\mathcal{J}_i-f\Big)
$$
where $f$ denotes the total momentum distribution
$$
f(t,x,p)=\sum_{i=1}^N f_i(t,x,p).
$$
By definition, we see that
\begin{equation}\label{indiff}
\sum_{i=1}^N\mathcal{J}_i=\sum_{i=1}^N\frac{\int_{\mathbb{R}^3}f_i\,\frac{dp_i}{p_i^0}}{\int_{\mathbb{R}^3}e^{-c\widetilde{\beta} p_i^0}\,\frac{dp_i}{p_i^0}}e^{-\widetilde{\beta} \widetilde{U}^\mu p_{i \mu }}=\frac{ \int_{\mathbb{R}^3}f\,\frac{dp}{p^0}}{\int_{\mathbb{R}^3}e^{-c\widetilde{\beta} p^0}\,\frac{dp}{p^0}}e^{-\widetilde{\beta} \widetilde{U}^\mu p_{\mu}}.
\end{equation}	
Recalling the definition of $n_iU_i^\mu$, it follows from \eqref{indiff0} that 
\begin{equation}\label{four-flow indiff}
\sum_{i=1}^{N}n_iU_i^\mu= \sum_{i=1}^Nc\int_{\mathbb{R}^3}p^\mu_if_i \,\frac{dp_i}{p_i^0}=c\int_{\mathbb{R}^3}p^\mu f \,\frac{dp}{p^0}\equiv  nU^\mu,
\end{equation}
which gives
\begin{align}\label{indiff22}\begin{split}
\widetilde{U}^\mu&=c\frac{\sum_{i=1}^{N}\frac{m_i}{\tau_i}n_iU_i^\mu}{\sqrt{\left( \sum_{i=1}^{N}\frac{m_i }{\tau_i}n_iU_i^\mu\right)\left( \sum_{j=1}^{N}\frac{m_j }{\tau_j}n_jU_{j\mu}\right)}}\cr
&=c\frac{ nU^\mu}{\sqrt{\left(nU^\mu\right)\left( nU_\mu\right)}}\cr
&=U^\mu.
\end{split}\end{align}
In the last line, we used the fact that $U^\mu$ has a constant length $U^\mu U_\mu=c^2$. Recall that the equilibrium coefficient $\widetilde{\beta}$ is determined by the relation in
Proposition \ref{JF} (2). 
Using the notation $\beta=mc^2\widetilde{\beta} $ and applying \eqref{indiff0} and \eqref{four-flow indiff} to the above identity, one finds
\begin{equation}\label{indiff3}
\frac{\int_{\mathbb{R}^3} e^{-\frac{\beta}{mc} p^0}\,dp}{\int_{\mathbb{R}^3} e^{-\frac{\beta}{mc} p^0}\,\frac{dp}{p^0}}\int_{\mathbb{R}^3}f \,\frac{dp}{p^0}=\frac{1}{c}\left[\left( \sum_{i=1}^{N}n_iU_i^\mu\right)\left( \sum_{j=1}^{N}n_jU_{j\mu}\right)\right]^{\frac{1}{2}}=n.
\end{equation}
Going back to \eqref{indiff} with \eqref{indiff22} and \eqref{indiff3}, we get
$$
\sum_{i=1}^N\mathcal{J}_i=\frac{\int_{\mathbb{R}^3}f \,\frac{dp}{p^0}}{\int_{\mathbb{R}^3} e^{-\frac{\beta}{mc} p^0}\,\frac{dp}{p^0}}e^{-\frac{\beta}{mc^2} U^\mu p_{\mu}}\equiv \mathcal{J}
$$
which is exactly the same as the J\"{u}ttner distribution for a single-component gas given in \eqref{single J}. In summary, we show that under assumption \eqref{indiff0}, the total momentum distribution function of \eqref{Marle} is governed by Marle model:  
 \begin{align*}
\partial_t f+\frac{cp}{p^0}\cdot\nabla_xf&=\frac{cm}{\tau p^0}\left(\mathcal{J} -f\right).
\end{align*}

\section{Newtonian limit}

In this section, we show that in the Newtonian limit, our model \eqref{Marle} reduces to the BGK model for gas mixtures of classical particles suggested by  Groppi, Rjasanow, and Spiga \cite{GRS,GRS2}:
\begin{equation}\label{Bisi}
\partial_t f_i  + v\cdot\nabla_{x}f_i=\nu_i\left(\mathcal{M}_i(f) -f_i \right),\qquad i=1,\cdots, N,
\end{equation}
where $f_i\equiv f_i(t,x,v)$ is the velocity distribution function representing the number density of particles on the phase point $(x,v)\in \Omega\times \mathbb{R}^3$ at time $t\in\mathbb{R}_+$, and $\nu_i$ is the collision frequency. The classical local Maxwellian $\mathcal{M}_i(f)$ is given by
\begin{equation}\label{Maxwellian}
\mathcal{M}_i(f)=\mathcal{M}_i(f_1,\cdots,f_N)=n_i^{\text{nr}}\left(\frac{m_i}{2\pi T^{\text{nr}}}\right)^{3/2}e^{-m_i|v-U^{\text{nr}}|^2/2T^{\text{nr}}},
\end{equation}
where the classical macroscopic parameters are given by the following manners:
\begin{itemize}
	\item Classical macroscopic number density, bulk velocity, and temperature for each $i$ species gas, $n_i^{\text{nr}},u_i^{\text{nr}}$ and $T_i^{\text{nr}}$ are defined by
	\begin{align*}
		n_i^{\text{nr}}&=\int_{\mathbb{R}^3} f_i\, dv, \qquad u_i^{\text{nr}}=\frac{1}{n_i^{\text{nr}}}\int_{\mathbb{R}^3} v f_i \, d v, \qquad T_i^{\text{nr}}=\frac{m_i}{3n_i^{\text{nr}}}\int_{\mathbb{R}^3}\left| v-u_i^{\text{nr}}\right|^2 f_i\, d v.
	\end{align*}
	\item The equilibrium velocity $U^{\text{nr}}$ and the equilibrium temperature $T^{\text{nr}}$ are defined as follows:
	\begin{align}\label{UT}
		\begin{split}
			U^{\text{nr}}&=\frac{\sum_{i=1}^N \nu_i m_i n_i^{\text{nr}} u_i^{\text{nr}}}{\sum_{i=1}^N \nu_i m_i n_i^{\text{nr}}},\\
			\frac{3}{2}\left(\sum_{i=1}^N\nu_i n_i^{\text{nr}}\right)T^{\text{nr}}&=\sum_i^N \nu_i \left[\frac{1}{2}m_in_i^{\text{nr}}\left(|u_i^{\text{nr}}|^2-|U^{\text{nr}}|^2\right)+\frac{3}{2}n_i^{\text{nr}}T_i^{\text{nr}} \right].
		\end{split}
	\end{align}
\end{itemize}

To investigate the Newtonian limit of \eqref{Marle}, we follow the line of  \cite[Section 3]{BCNS}, in which the Marle model for a single-component gas is considered. We extract the dimensionless numbers from variables $t,x,p_i$ as
$$
t=\overline{t} s,\qquad x=\overline{x}L,\qquad p_i= v\mu_i
$$
Here, $s$, $L$, $\mu_i$ are the typical time, the typical length, and the typical microscopic momentum respectively, and  $\overline{t}$ , $\overline{x}$, $v$ stand for the corresponding dimensionless numbers. Then the momentum distribution $f(t,x,p_i)$ can be represented as
$$
f_i(t,x,p_i)=\frac{\mathcal{N}}{\mu_i^3}\overline{f}_i(\overline{t},\overline{x}, v)
$$
where $\mathcal{N}$ is the typical number density, and 
\begin{equation*}
\mu_i=\frac{m_i L}{s}.
\end{equation*}
 Now we substitute the dimensionless numbers into  \eqref{Marle} to obtain 
\begin{equation}\label{dimensionless Marle2}
\frac{\partial}{\partial\overline{t}}\overline{f}_i  +\frac{ 1}{ \sqrt{1+|\varepsilon v|^2}} v\cdot\nabla_{\overline{x}}\overline{f}_i=  \frac{\nu_i}{\sqrt{1+|\varepsilon v|^2}}\left(\frac{\mu_i^3}{\mathcal{N}}\mathcal{J}_i(f)-\overline{f}_i \right)
\end{equation}
where $\varepsilon$ and $\nu_i$ denote
$$
\varepsilon:=\frac{\mu_i}{cm_i}=\frac{L}{cs},\qquad \nu_i:=\frac{s}{\tau_i}
$$
Note that $\varepsilon=\mu_i/cm_i$ is indenpendent of the index $i$. Our aim is to show that \eqref{dimensionless Marle2} reduces to the classical BGK model \eqref{Bisi} as $\varepsilon\rightarrow 0$. We first investigate the dimensionless form of $\mathcal{J}_i$ in the following lemma.
\begin{lemma}\label{dimensionless J} Assume $\varepsilon \ll 1$. Then,
	$$
	\mathcal{J}_i(f)=\frac{\mathcal{N}}{\mu_i^3}\overline{\mathcal{J}}_i(\overline{f})
	$$
	where $\overline{\mathcal{J}}_i(\overline{f})$ denotes
	$$
	\overline{\mathcal{J}}_i(\overline{f})=\frac{\int_{\mathbb{R}^3} \overline{f}_i \frac{d v}{\sqrt{1+|\varepsilon  v|^2}}}{\int_{\mathbb{R}^3} e^{-m_ic^2\widetilde{\beta} \sqrt{1+|\varepsilon  v|^2}} \frac{d v}{\sqrt{1+|\varepsilon  v|^2}}}\exp\left\{-m_ic^2\widetilde{\beta}\left(\overline{U}^0\sqrt{1+|\varepsilon v|^2} - \varepsilon\overline{U}\cdot v \right)\right\}
	$$
and $\overline{U}^\mu$ is
	\begin{align}\label{dimensionless U}\begin{split}
		\overline{U}^\mu=\frac{\sum_{i=1}^{N}\frac{m_i}{\tau_i}\left(\int_{\mathbb{R}^3}\overline{f}_i\,dv,\varepsilon \int_{\mathbb{R}^3} v\overline{f}_i\frac{dv}{\sqrt{1+|\varepsilon  v|^2}} \right)}{\sqrt{\left(\sum_{i=1}^{N}\frac{m_i }{\tau_i}\int_{\mathbb{R}^3}\overline{f}_i\,dv\right)^2-\left|\sum_{i=1}^{N}\frac{m_i }{\tau_i}\varepsilon\int_{\mathbb{R}^3} v\overline{f}_i\frac{dv}{\sqrt{1+|\varepsilon  v|^2}} \right|^2}}.	
\end{split}\end{align}	
	.
\end{lemma}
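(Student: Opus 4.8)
The plan is to begin from the closed form of $\mathcal{J}_i$ in Theorem \ref{JF3}, namely $\mathcal{J}_i=\big(\int f_i\,\tfrac{dp_i}{p_i^0}\big)/\big(\int e^{-c\widetilde{\beta}p_i^0}\,\tfrac{dp_i}{p_i^0}\big)\,e^{-\widetilde{\beta}\widetilde{U}^\mu p_{i\mu}}$, and to carry out the substitution $p_i=v\mu_i$ directly. Everything rests on two elementary identities: since $\varepsilon=\mu_i/cm_i$, the mass-shell condition gives $p_i^0=\sqrt{(cm_i)^2+\mu_i^2|v|^2}=cm_i\sqrt{1+|\varepsilon v|^2}$, and $dp_i=\mu_i^3\,dv$, so that $\tfrac{dp_i}{p_i^0}=\tfrac{\mu_i^3}{cm_i}\tfrac{dv}{\sqrt{1+|\varepsilon v|^2}}$.

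First I would transform the scalar prefactor. Inserting $f_i=(\mathcal{N}/\mu_i^3)\overline{f}_i$, the numerator $\int f_i\,\tfrac{dp_i}{p_i^0}$ produces a factor $\mathcal{N}/cm_i$ while the denominator $\int e^{-c\widetilde{\beta}p_i^0}\,\tfrac{dp_i}{p_i^0}$ produces $\mu_i^3/cm_i$; their ratio is precisely $(\mathcal{N}/\mu_i^3)$ times the head quotient of $\overline{\mathcal{J}}_i$, which accounts for the claimed overall prefactor $\mathcal{N}/\mu_i^3$.

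The main bookkeeping is the four-velocity $\widetilde{U}^\mu$ from Proposition \ref{JF} (3). I would compute $n_iU_i^\mu=c\int p_i^\mu f_i\,\tfrac{dp_i}{p_i^0}$ component by component, the crucial point being the time/space asymmetry: for $\mu=0$ the factor $p_i^0$ cancels and one gets $n_iU_i^0=c\mathcal{N}\int\overline{f}_i\,dv$, with neither $\varepsilon$ nor the weight $\sqrt{1+|\varepsilon v|^2}$; for $\mu=j$ the relation $p_i^j=v^j\mu_i$ brings a factor $\mu_i/m_i=\varepsilon c$ and retains the weight, giving $n_iU_i^j=\varepsilon c\mathcal{N}\int v^j\overline{f}_i\,\tfrac{dv}{\sqrt{1+|\varepsilon v|^2}}$. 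Hence each $\tfrac{m_i}{\tau_i}n_iU_i^\mu$ carries the common factor $c\mathcal{N}$, which cancels between the numerator and the square-root normalizer in Proposition \ref{JF} (3); the surviving prefactor $c$ yields $\widetilde{U}^\mu=c\,\overline{U}^\mu$ with $\overline{U}^\mu$ exactly as in \eqref{dimensionless U}, the $\varepsilon$ sitting inside the spatial components and inside the $|\cdot|^2$ of the normalizer.

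Finally I would substitute $\widetilde{U}^\mu=c\overline{U}^\mu$ into the exponent: writing $\widetilde{U}^\mu p_{i\mu}=\widetilde{U}^0p_i^0-\widetilde{U}\cdot p_i$ and using $p_i^0=cm_i\sqrt{1+|\varepsilon v|^2}$, $p_i=\varepsilon cm_i v$, both terms factor a common $c^2m_i$, so $-\widetilde{\beta}\widetilde{U}^\mu p_{i\mu}=-m_ic^2\widetilde{\beta}\big(\overline{U}^0\sqrt{1+|\varepsilon v|^2}-\varepsilon\overline{U}\cdot v\big)$, matching the exponent of $\overline{\mathcal{J}}_i$. Assembling prefactor, head quotient and exponential gives $\mathcal{J}_i=(\mathcal{N}/\mu_i^3)\overline{\mathcal{J}}_i(\overline{f})$. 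I expect no conceptual obstacle, since this is an exact nondimensionalization identity (the hypothesis $\varepsilon\ll1$ is not needed for the algebra, only for the subsequent limit). The one step demanding genuine care is the four-velocity: attaching the $\varepsilon$ factors and the $\sqrt{1+|\varepsilon v|^2}$ weights to the correct spatial components and checking that the $c\mathcal{N}$ normalization cancels cleanly, so that $\overline{U}^\mu$ depends on $\overline{f}$ alone and is free of the scaling constants $\mathcal{N},\mu_i$.
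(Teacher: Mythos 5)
Your proposal is correct and follows essentially the same route as the paper's proof: nondimensionalize $n_iU_i^\mu$ to get $n_iU_i^\mu=c\mathcal{N}\bigl(\int_{\mathbb{R}^3}\overline{f}_i\,dv,\ \varepsilon\int_{\mathbb{R}^3} v\overline{f}_i\,\tfrac{dv}{\sqrt{1+|\varepsilon v|^2}}\bigr)$, cancel the common factor $c\mathcal{N}$ in Proposition \ref{JF} (3) to obtain $\widetilde{U}^\mu=c\overline{U}^\mu$, and then substitute $p_i^0=cm_i\sqrt{1+|\varepsilon v|^2}$, $dp_i=\mu_i^3\,dv$ into the head quotient and exponent of Theorem \ref{JF3}. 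Your side remark is also accurate: the identity is exact algebra, and the hypothesis $\varepsilon\ll 1$ plays no role until the subsequent limit arguments.
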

\begin{proof}
	For reader's convenience, we record the definition of $\mathcal{J}_i(f)$
	\begin{align*}
	\mathcal{J}_i(f)=\frac{\int_{\mathbb{R}^3} f_i \frac{dp_i}{p_i^0}}{\int_{\mathbb{R}^3} e^{-c\widetilde{\beta} p_i^0} \frac{dp_i}{p_i^0}}e^{-\widetilde{\beta}\widetilde{U}^\mu p_{i \mu }}.
	\end{align*}
	Here $\widetilde{U}^\mu$ is defined as
	$$
	\widetilde{U}^\mu=c\frac{\sum_{i=1}^{N}\frac{m_i}{\tau_i}n_iU_i^\mu}{\sqrt{\left( \sum_{i=1}^{N}\frac{m_i }{\tau_i}n_iU_i^\mu\right)\left( \sum_{j=1}^{N}\frac{m_j }{\tau_j}n_jU_{j\mu}\right)}}
	$$
	where $n_i$ and $n_iU_i^\mu$ are given as
	\begin{align*}
	n_i&=\Biggl\{\biggl(\int_{\mathbb{R}^3}f_i \,dp_i\biggl)^2-\sum_{k=1}^3\biggl(\int_{\mathbb{R}^3}p^k_if_i \,\frac{dp_i}{p_i^0}\biggl)^2\Biggl\}^{1/2},\cr 
	n_iU_i^\mu&=c\int_{\mathbb{R}^3}p_i^\mu f_i\frac{dp_i}{p_i^0},
	\end{align*}
	and  $\widetilde{\beta}$ is uniquely determined by the relation in Proposition 3.1 (2).
	$$
	\sum_{i=1}^{N}\frac{m_i}{\tau_i}\frac{\int_{\mathbb{R}^3} e^{-c\widetilde{\beta}p_i^0}\,dp_i}{\int_{\mathbb{R}^3} e^{-c\widetilde{\beta}p_i^0}\,\frac{dp_i}{p_i^0}}\int_{\mathbb{R}^3}f_i \,\frac{dp_i}{p_i^0}=\frac{1}{c}\bigg[\biggl( \sum_{i=1}^{N}\frac{m_i }{\tau_i}n_iU_i^\mu\biggl)\biggl( \sum_{j=1}^{N}\frac{m_j }{\tau_j}n_jU_{j\mu}\biggl)\bigg]^{1/2}.
	$$
Inserting dimensionless numbers into the definition of $n_iU_i^\mu$, we have 
	$$
	n_iU_i^\mu= c\mathcal{N}\left(\int_{\mathbb{R}^3}\overline{f}_i\,dv, \varepsilon \int_{\mathbb{R}^3} v\overline{f}_i\frac{d v}{\sqrt{1+|\varepsilon v|^2}} \right),
	$$
	which yields
		\begin{align}\label{numerator}\begin{split}
	&\hspace{-0.7cm}\left( \sum_{i=1}^{N}\frac{m_i }{\tau_i}n_iU_i^\mu\right)\left( \sum_{j=1}^{N}\frac{m_j }{\tau_j}n_jU_{j\mu}\right)\cr
	&=\sum_{i,j}^{N}(2-\delta_{ij})\frac{m_i m_j}{\tau_i\tau_j}n_iU_i^\mu n_jU_{j\mu}\cr
	&=\sum_{i,j}^{N}(2-\delta_{ij})c^2\frac{m_i\mathcal{N} m_j\mathcal{N}}{\tau_i\tau_j}\cr
	&\times\left\{ \int_{\mathbb{R}^3}\overline{f}_i\,dv\int_{\mathbb{R}^3}\overline{f}_j\,dv-\varepsilon^2 \int_{\mathbb{R}^3} v\overline{f}_i\frac{dv}{\sqrt{1+|\varepsilon  v|^2}}\cdot\int_{\mathbb{R}^3} v\overline{f}_j\frac{dv}{\sqrt{1+|\varepsilon  v|^2}} \right\}\cr
	&=c^2\mathcal{N}^2\left(\sum_{i=1}^{N}\frac{m_i  }{\tau_i}\int_{\mathbb{R}^3}\overline{f}_i\,dv\right)^2-c^2\varepsilon^2\mathcal{N}^2\left|\sum_{i=1}^{N}\frac{m_i }{\tau_i} \int_{\mathbb{R}^3} v\overline{f}_i\frac{dv}{\sqrt{1+|\varepsilon  v|^2}} \right|^2.
	\end{split}\end{align}
	Here, $\delta_{ij}$ is the Kronecker delta. Thus we get
	\begin{align*} 
	\widetilde{U}^\mu&=c\frac{\sum_{i=1}^{N}\frac{m_i}{\tau_i} \left( \int_{\mathbb{R}^3}\overline{f}_i\,dv,\varepsilon  \int_{\mathbb{R}^3} v\overline{f}_i\frac{dv}{\sqrt{1+|\varepsilon  v|^2}} \right)}{\sqrt{\left(\sum_{i=1}^{N}\frac{m_i  }{\tau_i}\int_{\mathbb{R}^3}\overline{f}_i\,dv\right)^2-\left|\sum_{i=1}^{N}\frac{m_i  }{\tau_i}\varepsilon\int_{\mathbb{R}^3} v\overline{f}_i\frac{dv}{\sqrt{1+|\varepsilon  v|^2}} \right|^2}}
	= c\overline{U}^\mu.
	 \end{align*}
	Therefore, one finds
	\begin{align*}
	\mathcal{J}_i(f)&=\frac{\int_{\mathbb{R}^3} f_i \frac{dp_i}{p_i^0}}{\int_{\mathbb{R}^3} e^{-c\widetilde{\beta} p_i^0} \frac{dp_i}{p_i^0}}e^{-\widetilde{\beta}\widetilde{U}^\mu p_{i \mu }}\cr
	&=\frac{\mathcal{N}}{\mu_i^3}\frac{\int_{\mathbb{R}^3} \overline{f}_i \frac{d v}{\sqrt{1+|\varepsilon  v|^2}}}{\int_{\mathbb{R}^3} e^{-m_ic^2\widetilde{\beta} \sqrt{1+|\varepsilon  v|^2}} \frac{d v}{\sqrt{1+|\varepsilon  v|^2}}}\exp\left\{-m_ic^2\widetilde{\beta}\left(\overline{U}^0\sqrt{1+|\varepsilon v|^2} - \varepsilon\overline{U}\cdot v \right)\right\}\cr
	&=\frac{\mathcal{N}}{\mu_i^3}\overline{\mathcal{J}}_i(\overline{f}).
	\end{align*}
\end{proof}
Due to Lemma \ref{dimensionless J}, it only remains to show that $\overline{\mathcal{J}}_i(\overline{f})$ approaches the Maxwellian attractor \eqref{Maxwellian} as $\varepsilon\rightarrow 0$. For this, we present the following lemma.
\begin{lemma}\label{beta epsilon}
	For sufficiently small $\varepsilon>0$, we have
	\[
	\widetilde{\beta}^{-1}\leq C\varepsilon^2.
	\]
	for some $C>0$.
\end{lemma}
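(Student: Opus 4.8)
The plan is to rewrite the scalar constraint of Proposition \ref{JF} (2) in the dimensionless variables and reduce it to a single scalar equation in which $\widetilde\beta$ is the only unknown, then read off the size of $\widetilde\beta$ by comparing the two sides as $\varepsilon\to0$. First I would substitute $p_i=\mu_i v$, $f_i=(\mathcal{N}/\mu_i^3)\overline f_i$ and $p_i^0=cm_i\sqrt{1+|\varepsilon v|^2}$ into the relation, using the identity
$\int_{\mathbb{R}^3}e^{-c\widetilde\beta p_i^0}\tfrac{dp_i}{p_i^0}\big/\int_{\mathbb{R}^3}e^{-c\widetilde\beta p_i^0}\,dp_i=\tfrac{1}{m_ic}K_1(m_ic^2\widetilde\beta)/K_2(m_ic^2\widetilde\beta)$
already obtained in \eqref{temperature}, together with the expression for $n_iU_i^\mu$ computed in the proof of Lemma \ref{dimensionless J}. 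Writing $\gamma_i:=m_ic^2\widetilde\beta$, the constraint becomes, after dividing by $\mathcal{N}$,
\begin{equation*}
\sum_{i=1}^N\frac{m_i}{\tau_i}\frac{K_2(\gamma_i)}{K_1(\gamma_i)}\int_{\mathbb{R}^3}\overline f_i\,\frac{dv}{\sqrt{1+|\varepsilon v|^2}}=\sqrt{(a^0)^2-\varepsilon^2|a|^2},
\end{equation*}
where $a^0:=\sum_i\frac{m_i}{\tau_i}\int\overline f_i\,dv$ and $a:=\sum_i\frac{m_i}{\tau_i}\int v\,\overline f_i\frac{dv}{\sqrt{1+|\varepsilon v|^2}}$; this last step simply reuses the computation \eqref{numerator}.

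Subtracting $\widetilde a^0:=\sum_i\frac{m_i}{\tau_i}\int\overline f_i\frac{dv}{\sqrt{1+|\varepsilon v|^2}}$ from both sides and using $K_2>K_1$, I obtain the reduced identity
\begin{equation*}
\sum_{i=1}^N\frac{m_i}{\tau_i}\Bigl(\frac{K_2(\gamma_i)}{K_1(\gamma_i)}-1\Bigr)\int_{\mathbb{R}^3}\overline f_i\,\frac{dv}{\sqrt{1+|\varepsilon v|^2}}=\sqrt{(a^0)^2-\varepsilon^2|a|^2}-\widetilde a^0=:D,
\end{equation*}
whose left-hand side is a sum of nonnegative terms. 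I would then bound $D$ above by $C\varepsilon^2$: since $\sqrt{(a^0)^2-\varepsilon^2|a|^2}\le a^0$ and $1-(1+|\varepsilon v|^2)^{-1/2}\le \tfrac12\varepsilon^2|v|^2$, one has $D\le a^0-\widetilde a^0\le \tfrac12\varepsilon^2\sum_i\frac{m_i}{\tau_i}\int|v|^2\overline f_i\,dv$, which is finite by the moment hypothesis on $f_i$. Retaining a single index $i_0$ in the nonnegative left-hand sum and using $\int\overline f_{i_0}\frac{dv}{\sqrt{1+|\varepsilon v|^2}}\to\int\overline f_{i_0}\,dv>0$, this yields $K_2(\gamma_{i_0})/K_1(\gamma_{i_0})-1\le C'\varepsilon^2$ for $\varepsilon$ small.

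It then remains to convert smallness of the Bessel ratio into largeness of $\gamma_{i_0}$, hence of $\widetilde\beta$. Here I would use that $\gamma\mapsto K_2(\gamma)/K_1(\gamma)$ is a strictly decreasing bijection of $(0,\infty)$ onto $(1,\infty)$ with the asymptotics $K_2(\gamma)/K_1(\gamma)=1+\tfrac{3}{2\gamma}+O(\gamma^{-2})$ as $\gamma\to\infty$, so that there are $\gamma_*>0$ and $c_*>0$ with $K_2(\gamma)/K_1(\gamma)-1\ge c_*/\gamma$ for $\gamma\ge\gamma_*$. For $\varepsilon$ small the bound $C'\varepsilon^2$ lies below $K_2(\gamma_*)/K_1(\gamma_*)-1$, so monotonicity forces $\gamma_{i_0}>\gamma_*$, and then $c_*/\gamma_{i_0}\le K_2(\gamma_{i_0})/K_1(\gamma_{i_0})-1\le C'\varepsilon^2$ gives $\gamma_{i_0}\ge c_*/(C'\varepsilon^2)$. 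Since $\widetilde\beta=\gamma_{i_0}/(m_{i_0}c^2)$, this is exactly $\widetilde\beta^{-1}\le C\varepsilon^2$.

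The main obstacle I anticipate is this last step: the \emph{lower} bound $K_2(\gamma)/K_1(\gamma)-1\ge c_*/\gamma$ for large $\gamma$. The opposite inequality $K_2/K_1-1\le 2/\gamma$ is already implicit in Proposition \ref{JF2} (through $M_i/\widetilde M_i\le cm_i+2/(c\widetilde\beta)$), but that estimate points the wrong way; what is needed is the matching lower estimate. This can be supplied either from the standard asymptotic expansion of the modified Bessel functions of the second kind, or, to stay self-contained, by a Laplace-type estimate of $M_i,\widetilde M_i$ concentrated near $r=0$ showing directly that $M_i/\widetilde M_i-cm_i=\tfrac{3}{2c\widetilde\beta}+O(\widetilde\beta^{-2})$. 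One should also verify that the constants entering $C,C',c_*$ remain bounded as $\varepsilon\to0$, which follows from treating the dimensionless profiles $\overline f_i$, and in particular their zeroth and second moments, as fixed and independent of $\varepsilon$.
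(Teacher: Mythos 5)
Your proof is correct and follows essentially the same path as the paper's: nondimensionalize the constraint of Proposition \ref{JF} (2), use the Bessel identities \eqref{modified} to reduce it to $K_2(\beta_i)/K_1(\beta_i)-1=\mathcal{O}(\varepsilon^2)$, and then combine the monotonicity/range of $K_2/K_1$ (from Proposition \ref{JF2}) with its large-argument expansion \eqref{expansion} to conclude $\beta_i^{-1}\leq C\varepsilon^2$. The only differences are organizational—your one-sided bounds $\sqrt{(a^0)^2-\varepsilon^2|a|^2}\le a^0$ and $1-(1+|\varepsilon v|^2)^{-1/2}\le\tfrac12\varepsilon^2|v|^2$ replace the paper's $\mathcal{O}(\varepsilon^2)$ bookkeeping in \eqref{lhs}--\eqref{beta dimensionless}, and your threshold argument via $K_2(\gamma)/K_1(\gamma)-1\ge c_*/\gamma$ for $\gamma\ge\gamma_*$ absorbs in one stroke the paper's Step I (showing $\widetilde{\beta}\to\infty$ first) and its quadratic-inequality case analysis \eqref{case}—and both arguments rest on the same implicit assumption that the second moments of the fixed profiles $\overline{f}_i$ are finite, which you in fact flag more explicitly than the paper does.
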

\begin{proof}	We divide the proof of the Lemma into the following two step:\\
\noindent{\bf Step I:} First, we start with a slightly weaker statement: $\displaystyle \lim_{\varepsilon\rightarrow 0}\widetilde{\beta}=\infty$.\newline 
\noindent Proof of the claim:
Observe that the dimensionless form of the left-hand side of the relation in Proposition 3.1 (2) reads
	\begin{align}\label{lhs}\begin{split}
	&\sum_{i=1}^{N}\frac{m_i}{\tau_i}\frac{\int_{\mathbb{R}^3} e^{-c\widetilde{\beta}p_i^0}\,dp_i}{\int_{\mathbb{R}^3} e^{-c\widetilde{\beta}p_i^0}\,\frac{dp_i}{p_i^0}}\int_{\mathbb{R}^3}f_i \,\frac{dp_i}{p_i^0}\cr
	&\qquad=\sum_{i=1}^{N}\frac{m_i\mathcal{N}}{\tau_i}\frac{\int_{\mathbb{R}^3} e^{-m_ic^2\widetilde{\beta}\sqrt{1+|\varepsilon  v|^2}}\,dv}{\int_{\mathbb{R}^3} e^{-m_ic^2\widetilde{\beta}\sqrt{1+|\varepsilon  v|^2}}\frac{dv}{\sqrt{1+|\varepsilon  v|^2}}}\int_{\mathbb{R}^3}\overline{f}_i \,\frac{dv}{\sqrt{1+|\varepsilon v|^2}}\cr
	&\qquad=\sum_{i=1}^{N}\frac{m_i\mathcal{N}}{\tau_i}\frac{\int_{\mathbb{R}^3} e^{-m_ic^2\widetilde{\beta}\sqrt{1+|\varepsilon  v|^2}}\,dv}{\int_{\mathbb{R}^3} e^{-m_ic^2\widetilde{\beta}\sqrt{1+|\varepsilon  v|^2}}\frac{dv}{\sqrt{1+|\varepsilon v|^2}}}\left(\int_{\mathbb{R}^3}\overline{f}_i \,dv+\mathcal{O}(\varepsilon^2)\right)
	\end{split}\end{align}
	where we used
	\begin{align}\label{identity1}\begin{split}
	\frac{1}{\sqrt{1+|\varepsilon v|^2}}&= 1+\mathcal{O}(\varepsilon^2).
	\end{split}\end{align}
In the case of the right-hand side of the relation in Proposition 3.1 (2), one can see from \eqref{numerator} that 
\begin{align}\label{rhs}\begin{split}
		&\frac{1}{c}\left[\biggl( \sum_{i=1}^{N}\frac{m_i }{\tau_i}n_iU_i^\mu\biggl)\biggl( \sum_{j=1}^{N}\frac{m_j }{\tau_j}n_jU_{j\mu}\biggl)\right]^{1/2}\cr
		&\qquad=\sum_{i=1}^{N}\frac{m_i\mathcal{N} }{\tau_i}\int_{\mathbb{R}^3}\overline{f}_i\,dv\left[1- \left|\varepsilon\frac{\sum_{i=1}^{N}\frac{m_i  }{\tau_i}\int_{\mathbb{R}^3} v\overline{f}_i\frac{dv}{\sqrt{1+|\varepsilon  v|^2}}}{\sum_{i=1}^{N}\frac{m_i  }{\tau_i}\int_{\mathbb{R}^3}\overline{f}_i\,dv} \right|^2\right]^{1/2}\cr
		&\qquad=\sum_{i=1}^{N}\frac{m_i\mathcal{N} }{\tau_i}\int_{\mathbb{R}^3}\overline{f}_i\,dv+\mathcal{O}(\varepsilon^2)
\end{split}\end{align}
where we used 
\begin{align}\label{identity0}
	\sqrt{1-|\varepsilon x|^2}= 1-\frac{\varepsilon^2|x|^2}{2}+\mathcal{O}(\varepsilon^4).
\end{align}
	Combining \eqref{lhs} and \eqref{rhs}, one finds
	\begin{align}\label{beta dimensionless}
		\begin{split}
	&\sum_{i=1}^{N}\frac{m_i\mathcal{N}}{\tau_i}\left(\frac{\int_{\mathbb{R}^3} e^{-m_ic^2\widetilde{\beta}\sqrt{1+|\varepsilon  v|^2}}\,dv}{\int_{\mathbb{R}^3} e^{-m_ic^2\widetilde{\beta}\sqrt{1+|\varepsilon  v|^2}}\frac{dv}{\sqrt{1+|\varepsilon  v|^2}}}-1\right)\int_{\mathbb{R}^3}\overline{f}_i \,dv\\
	&\qquad=\left(\sum_{i=1}^{N}\frac{m_i\mathcal{N}}{\tau_i}\frac{\int_{\mathbb{R}^3} e^{-m_ic^2\widetilde{\beta}\sqrt{1+|\varepsilon  v|^2}}\,dv}{\int_{\mathbb{R}^3} e^{-m_ic^2\widetilde{\beta}\sqrt{1+|\varepsilon  v|^2}}\frac{dv}{\sqrt{1+|\varepsilon  v|^2}}}\right)\mathcal{O}(\varepsilon^2)+\mathcal{O}(\varepsilon^2)
		\end{split}
	\end{align}
	which implies
	$$
	\frac{\int_{\mathbb{R}^3} e^{-m_ic^2\widetilde{\beta}\sqrt{1+|\varepsilon  v|^2}}\,dv}{\int_{\mathbb{R}^3} e^{-m_ic^2\widetilde{\beta}\sqrt{1+|\varepsilon  v|^2}}\frac{dv}{\sqrt{1+|\varepsilon v|^2}}} \rightarrow 1\qquad\text{as}\qquad \varepsilon\rightarrow 0.
	$$
	Since the above term is strictly decreasing on $\widetilde{\beta}\in(0,\infty)$ and ranges from $1$ to $\infty$ (see the proof of Proposition \ref{JF2}), one can see that $\widetilde{\beta}\rightarrow \infty$ as $\varepsilon\rightarrow 0$. \noindent\newline
	{\bf Step II: Proof of the lemma:} It follows from the change of variables $\varepsilon v\rightarrow  v$ and spherical coordinates	that 
	\begin{align}\label{MM}\begin{split}  
	\int_{\mathbb{R}^3} e^{-m_ic^2\widetilde{\beta}\sqrt{1+|\varepsilon  v|^2}}\frac{dv}{\sqrt{1+|\varepsilon  v|^2}}&=\frac{4\pi}{\varepsilon^3}\int_{0}^\infty \frac{r^2}{\sqrt{1+r^2}}e^{-\beta_i\sqrt{1+|r|^2}}\,dr,\cr
	\int_{\mathbb{R}^3} e^{-m_ic^2\widetilde{\beta}\sqrt{1+|\varepsilon  v|^2}}\,dv&= \frac{4\pi}{\varepsilon^3}\int_{0}^\infty r^2 e^{-\beta_i\sqrt{1+|r|^2}}\,dr,
\end{split}	\end{align}
	where $\beta_i$ denotes
	$$
	\beta_i=m_ic^2\widetilde{\beta}.
	$$
	Using an integration by parts and \eqref{bessel function}, \eqref{MM} reduces to
	\begin{align} \label{modified}\begin{split}
	\int_{\mathbb{R}^3} e^{-m_ic^2\widetilde{\beta}\sqrt{1+|\varepsilon  v|^2}}\frac{dv}{\sqrt{1+|\varepsilon  v|^2}}&= \frac{4\pi}{\varepsilon^3\beta_i} K_1(\beta_i),\cr
	\int_{\mathbb{R}^3} e^{-m_ic^2\widetilde{\beta}\sqrt{1+|\varepsilon  v|^2}}\,dv&=  \frac{4\pi}{\varepsilon^3\beta_i} K_2(\beta_i).
	\end{split}\end{align}
Inserting \eqref{modified} into \eqref{beta dimensionless}, one finds
\begin{equation*}
	\sum_{i=1}^{N}\frac{m_i\mathcal{N}}{\tau_i}\left(\frac{K_2(\beta_i)}{K_1(\beta_i)}-1\right)\int_{\mathbb{R}^3}\overline{f}_i \,dv=\mathcal{O}(\varepsilon^2).
\end{equation*}
Since $K_2\geq K_1$, and all the summands are positive, we can conclude from this that
\begin{align*}
	\frac{K_2(\beta_i)}{K_1(\beta_i)}-1=\mathcal{O}(\varepsilon^2),
\end{align*}
which means
\begin{align}\label{bessel} 
\left|\frac{K_2(\beta_i)}{K_1(\beta_i)}-1\right|\leq C\varepsilon^2,
\end{align}
for some $C>0$.
	It is well-known \cite{CK} that for $\beta_i\gg 1$, the modified Bessel functions of the second kind are expanded as
	\begin{align}\label{expansion}\begin{split}
	K_1(\beta_i)&=\sqrt{\frac{\pi}{2\beta_i}}\frac{1}{e^{\beta_i}}\left(1+\frac{3}{8\beta_i}-\frac{15}{2!(8\beta_i)^2}+\cdots\right),\cr
	K_2(\beta_i)&=\sqrt{\frac{\pi}{2\beta_i}}\frac{1}{e^{\beta_i}}\left(1+\frac{15}{8\beta_i}+\frac{105}{2!(8\beta_i)^2}+\cdots\right),
	\end{split}\end{align}
which gives 
\begin{align*}
	\frac{K_2(\beta_i)}{K_1(\beta_i)}=1+\frac{3}{2\beta_i}+\mathcal{O}\left(\frac{1}{\beta_i^2}\right).
\end{align*}
for some $C>0$.
That is, 
\begin{align}\label{com}
\left|\frac{K_2(\beta_i)}{K_1(\beta_i)}-1-\frac{3}{2\beta_i}\right|\leq C\left(\frac{1}{\beta_i^2}\right).
\end{align}
Combining \eqref{bessel} and \eqref{com}, we get
\[
\frac{3}{2\beta_i}\leq \left|\frac{K_2(\beta_i)}{K_1(\beta_i)}-1\right|+C\left(\frac{1}{\beta_i^2}\right)\leq C\varepsilon^2+C\left(\frac{1}{\beta_i^2}\right).
\]
If we solve this quadratic inequality w.r.t $1/\beta_i$, we get
\begin{equation}\label{case}
\frac{1}{\beta_i}\leq \frac{\frac{3}{2}-\sqrt{\frac{9}{4}-4
C^2\varepsilon^2}}{2C},\quad\mbox{ or }\quad
\frac{1}{\beta_i}\geq \frac{\frac{3}{2}+\sqrt{\frac{9}{4}-4
		C^2\varepsilon^2}}{2C}.
\end{equation}
In Step I, we've shown that $\beta_i$ is large when $\varepsilon$ is sufficiently small. Therefore, we consider only the first case in \eqref{case}:
\[
\frac{1}{\beta_i}\leq \frac{\frac{3}{2}-\sqrt{\frac{9}{4}-4
		C^2\varepsilon^2}}{2C}.
\]
Then, we use
\[
\frac{3}{2}-\sqrt{\frac{9}{4}-x}\leq x  \quad\mbox{ when }0\leq x\leq 2
\]
to find
\[
\frac{1}{\beta_i}\leq 2C\varepsilon^2.
\]
This completes the proof.
\end{proof}

\begin{proposition}
	 The dimensionless form of $\mathcal{J}_i$ approaches the Maxwellian \eqref{Maxwellian} in the Newtonian limit:
	 \begin{align*}
	 	\overline{\mathcal{J}}_i(\overline{f})\rightarrow \mathcal{M}_i(\overline{f})\qquad \text{as}\qquad \varepsilon\rightarrow0.
	 \end{align*}
\begin{proof}
Recall from Lemma \ref{dimensionless J} that
	$$
		\overline{\mathcal{J}}_i(\overline{f_i})=\frac{\int_{\mathbb{R}^3} \overline{f}_i \frac{d v}{\sqrt{1+|\varepsilon  v|^2}}}{\int_{\mathbb{R}^3} e^{-m_ic^2\widetilde{\beta} \sqrt{1+|\varepsilon  v|^2}} \frac{d v}{\sqrt{1+|\varepsilon  v|^2}}}\exp\left\{-m_ic^2\widetilde{\beta}\left(\overline{U}^0\sqrt{1+|\varepsilon v|^2} - \varepsilon\overline{U}\cdot v \right)\right\}.
	$$
It follows from \eqref{identity1} that
	$$
	\int_{\mathbb{R}^3} \overline{f}_i \frac{dv}{\sqrt{1+|\varepsilon  v|^2}}=\int_{\mathbb{R}^3} \overline{f}_i dv+\mathcal{O}(\varepsilon^2),
	$$
	which together with Lemma \ref{beta epsilon}, \eqref{modified} and \eqref{expansion} gives
	\begin{align}\label{multiplier}\begin{split}
	\frac{\int_{\mathbb{R}^3} \overline{f}_i \frac{dv}{\sqrt{1+|\varepsilon  v|^2}}}{\int_{\mathbb{R}^3} e^{-m_ic^2\widetilde{\beta} \sqrt{1+|\varepsilon  v|^2}}\frac{dv}{\sqrt{1+|\varepsilon  v|^2}}}
	&=\frac{\int_{\mathbb{R}^3} \overline{f}_i dv+\mathcal{O}(\varepsilon^2)}{\frac{4\pi}{\varepsilon^3\beta_i}\sqrt{\frac{\pi}{2\beta_i}}e^{-\beta_i}(1+\mathcal{O}(\beta_i^{-1}))}\cr
	&=\varepsilon^3\left(\frac{\beta_i}{2\pi}\right)^{3/2}e^{\beta_i}\left(\int_{\mathbb{R}^3} \overline{f}_i dv+\mathcal{O}(\varepsilon^2)\right)\\
	&=\varepsilon^3\left(\frac{\beta_i}{2\pi}\right)^{3/2}e^{\beta_i}\left(n_i^{\text{nr}}+\mathcal{O}(\varepsilon^2)\right)	
\end{split}	\end{align}
where the classical macroscopic number density $n_i^{\text{nr}}$ of $\overline{f_i}$ is given by
\begin{align*}
	n_i^{\text{nr}}=\int_{\mathbb{R}^3} \overline{f}_i dv.
\end{align*}
On the other hand, we recall from \eqref{dimensionless U} that 
	\begin{align*}
	\overline{U}^0\sqrt{1+|\varepsilon v|^2} - \varepsilon\overline{U}\cdot v &=\frac{\sqrt{1+|\varepsilon v|^2}\sum_{i=1}^{N}\frac{m_i  }{\tau_i}\int_{\mathbb{R}^3}\overline{f}_i\,dv-\varepsilon^2v\cdot\sum_{i=1}^{N}\frac{m_i }{\tau_i}\int_{\mathbb{R}^3} v\overline{f}_i\frac{dv}{\sqrt{1+|\varepsilon  v|^2}}}{\sqrt{\left(\sum_{i=1}^{N}\frac{m_i  }{\tau_i}\int_{\mathbb{R}^3}\overline{f}_i\,dv\right)^2-\left|\sum_{i=1}^{N}\frac{m_i  }{\tau_i}\varepsilon\int_{\mathbb{R}^3} v\overline{f}_i\frac{dv}{\sqrt{1+|\varepsilon  v|^2}} \right|^2}}
	\end{align*}
Applying \eqref{identity1} and \eqref{identity0} to the numerator, we get
	\begin{align*}
	&\sqrt{1+|\varepsilon v|^2}\sum_{i=1}^{N}\frac{m_i }{\tau_i}\int_{\mathbb{R}^3}\overline{f}_i\,dv-\varepsilon^2v\cdot\sum_{i=1}^{N}\frac{m_i }{\tau_i}\int_{\mathbb{R}^3} v\overline{f}_i\frac{dv}{\sqrt{1+|\varepsilon  v|^2}}\cr
	&=\sum_{i=1}^{N}\frac{m_i }{\tau_i} \int_{\mathbb{R}^3}\overline{f}_i\,dv\left(1+\frac{|\varepsilon v|^2}{2}+\mathcal{O}(\varepsilon^4)\right)-\varepsilon^2v\cdot\sum_{i=1}^{N}\frac{m_i }{\tau_i}\int_{\mathbb{R}^3} v\overline{f}_i \,dv \left(1+\mathcal{O}(\varepsilon^2)\right).
	\end{align*}
In the same manner, the denominator reduces to
	\begin{align}\label{denominator}\begin{split}
	&\sum_{i=1}^{N}\frac{m_i  }{\tau_i}\int_{\mathbb{R}^3}\overline{f}_i\,dv \left[1-\left|\varepsilon\frac{\sum_{i=1}^{N}\frac{m_i  }{\tau_i}\int_{\mathbb{R}^3} v\overline{f}_i\frac{dv}{\sqrt{1+|\varepsilon  v|^2}}}{ \sum_{i=1}^{N}\frac{m_i  }{\tau_i}\int_{\mathbb{R}^3}\overline{f}_i\,dv } \right|^2\right]^{1/2}\cr
	&\qquad=\sum_{i=1}^{N}\frac{m_i  }{\tau_i}\int_{\mathbb{R}^3}\overline{f}_i\,dv\left(1-\frac{\varepsilon^2}{2}\left|\frac{\sum_{i=1}^{N}\frac{m_i  }{\tau_i}\int_{\mathbb{R}^3} v\overline{f}_i\,dv}{ \sum_{i=1}^{N}\frac{m_i  }{\tau_i}\int_{\mathbb{R}^3}\overline{f}_i\,dv } \right|^2\right)+\mathcal{O}(\varepsilon^4).
	\end{split}\end{align}
	From these observations, we have
	\begin{align*}
&\overline{U}^0\sqrt{1+|\varepsilon v|^2} - \varepsilon\overline{U}\cdot v \cr
&=\frac{\sum_{i=1}^{N}\frac{m_i }{\tau_i}\int_{\mathbb{R}^3}\overline{f}_i\,dv\left(1+\frac{|\varepsilon v|^2}{2}+\mathcal{O}(\varepsilon^4)\right)-\varepsilon^2v\cdot\sum_{i=1}^{N}\frac{m_i }{\tau_i} \int_{\mathbb{R}^3} v\overline{f}_i \,dv \left(1+\mathcal{O}(\varepsilon^2)\right)}{\sum_{i=1}^{N}\frac{m_i  }{\tau_i}\int_{\mathbb{R}^3}\overline{f}_i\,dv\left(1-\frac{\varepsilon^2}{2}\left|\frac{\sum_{i=1}^{N}\frac{m_i  }{\tau_i}\int_{\mathbb{R}^3} v\overline{f}_i\,dv}{ \sum_{i=1}^{N}\frac{m_i  }{\tau_i}\int_{\mathbb{R}^3}\overline{f}_i\,dv } \right|^2\right)+\mathcal{O}(\varepsilon^4)} \cr
	&=\frac{1+\frac{|\varepsilon v|^2}{2}-\varepsilon^2v\cdot\frac{\sum_{i=1}^{N}\frac{m_i }{\tau_i}\int_{\mathbb{R}^3} v\overline{f}_i \,dv}{\sum_{i=1}^{N}\frac{m_i  }{\tau_i}\int_{\mathbb{R}^3}\overline{f}_i\,dv}+\mathcal{O}(\varepsilon^4) }{1-\frac{\varepsilon^2}{2}\left|\frac{\sum_{i=1}^{N}\frac{m_i  }{\tau_i}\int_{\mathbb{R}^3} v\overline{f}_i\,dv}{ \sum_{i=1}^{N}\frac{m_i  }{\tau_i}\int_{\mathbb{R}^3}\overline{f}_i\,dv } \right|^2 +\mathcal{O}(\varepsilon^4)} \cr
	&=\left(1+\frac{\varepsilon^2|v|^2}{2}-\varepsilon^2v\cdot\frac{\sum_{i=1}^{N}\frac{m_i }{\tau_i}\int_{\mathbb{R}^3}v\overline{f}_i \,dv}{\sum_{i=1}^{N}\frac{m_i  }{\tau_i}\int_{\mathbb{R}^3}\overline{f}_i\,dv}+\mathcal{O}(\varepsilon^4) \right)\left(1+\frac{\varepsilon^2}{2}\left|\frac{\sum_{i=1}^{N}\frac{m_i  }{\tau_i}\int_{\mathbb{R}^3} v\overline{f}_i\,dv}{ \sum_{i=1}^{N}\frac{m_i  }{\tau_i}\int_{\mathbb{R}^3}\overline{f}_i\,dv } \right|^2+\mathcal{O}(\varepsilon^4)\right)\cr
	&=1+\frac{\varepsilon^2}{2}\left|\frac{\sum_{i=1}^{N}\frac{m_i  }{\tau_i}\int_{\mathbb{R}^3} v\overline{f}_i\,dv}{ \sum_{i=1}^{N}\frac{m_i  }{\tau_i}\int_{\mathbb{R}^3}\overline{f}_i\,dv } \right|^2+\frac{\varepsilon^2|v|^2}{2}-\varepsilon^2v\cdot\frac{\sum_{i=1}^{N}\frac{m_i }{\tau_i}\int_{\mathbb{R}^3} v\overline{f}_i \,dv}{\sum_{i=1}^{N}\frac{m_i  }{\tau_i}\int_{\mathbb{R}^3}\overline{f}_i\,dv}+\mathcal{O}(\varepsilon^4)\cr
	&=1+\frac{\varepsilon^2}{2}\left|v-U^{\text{nr}}\right|^2+\mathcal{O}(\varepsilon^4)
	\end{align*}
	where $U^{\text{nr}}$ is given by (See \eqref{UT})
	\begin{align*}
		U^{\text{nr}}=\frac{\sum_{i=1}^{N}\frac{m_i  }{\tau_i}\int_{\mathbb{R}^3} v\overline{f}_i\,dv}{ \sum_{i=1}^{N}\frac{m_i  }{\tau_i}\int_{\mathbb{R}^3}\overline{f}_i\,dv }=\frac{\sum_{i=1}^{N}\nu_im_i\int_{\mathbb{R}^3} v\overline{f}_i\,dv}{ \sum_{i=1}^{N}\nu_im_i\int_{\mathbb{R}^3}\overline{f}_i\,dv }=\frac{\sum_{i=1}^{N}\nu_im_in_i^{\text{nr}}u_i^{\text{nr}}}{ \sum_{i=1}^{N}\nu_im_in_i^{\text{nr}} },
	\end{align*}
	with the classical macroscopic bulk velocity $u_i^{\text{nr}}$ of $\overline{f_i}$ given by
	\begin{align*}
		n_i^{\text{nr}}u_i^{\text{nr}}&=\int_{\mathbb{R}^3}v\overline{f_i}\,dv.
	\end{align*}
 Thus, we obtain
	\begin{align*}
	\exp\left\{-m_ic^2\widetilde{\beta}\left(\overline{U}^0\sqrt{1+|\varepsilon v|^2} - \varepsilon\overline{U}\cdot v \right)\right\}&=\exp\left\{-m_ic^2\widetilde{\beta}\left(1+\frac{\varepsilon^2}{2}\left|v-U\right|^2+\mathcal{O}(\varepsilon^4) \right)\right\}\cr
	&=\exp(-\beta_i)\exp\left(-\varepsilon^2m_ic^2\widetilde{\beta}\frac{\left|v-U\right|^2}{2}+\mathcal{O}(\varepsilon^2)\right),
	\end{align*}
	which, together with \eqref{multiplier}, gives
	\begin{align*}
		\begin{split}
	\overline{\mathcal{J}}_i(\overline{f_i})&=\left(\varepsilon^3\left(\frac{\beta_i}{2\pi}\right)^{3/2}e^{\beta_i}\left(n_i^{\text{nr}}+\mathcal{O}(\varepsilon^2)\right)	\right)\left(\exp(-\beta_i)\exp\left(-\varepsilon^2m_ic^2\widetilde{\beta}\frac{\left|v-U^{\text{nr}}\right|^2}{2}+\mathcal{O}(\varepsilon^4)\right)\right)\\
	&=\left(\frac{\varepsilon^2m_ic^2\widetilde{\beta}}{2\pi}\right)^{3/2}\left(n_i^{\text{nr}}+\mathcal{O}(\varepsilon^2)\right)\exp\left(-\varepsilon^2m_ic^2\widetilde{\beta}\frac{\left|v-U^{\text{nr}}\right|^2}{2}+\mathcal{O}(\varepsilon^4)\right).
		\end{split}
	\end{align*}
	It only remains to show that $\varepsilon^2c^2\widetilde{\beta}$ goes to $1/T^{\text{nr}}$ as $\varepsilon\rightarrow0$, where $T^{\text{nr}}$ is given by
	\begin{align*}
		T^{\text{nr}}=\sum_{i=1}^{N}\nu_i\left(\frac{1}{2} m_i n_i^{\text{nr}}\left(|u_i^{\text{nr}}|^2-|U^{\text{nr}}|^2\right)+\frac{3}{2} n_i^{\text{nr}}T_i^{\text{nr}}\right)\bigg/ \left(\frac{3}{2}\sum_{i=1}^{N}\nu_in_i^{\text{nr}} \right),
	\end{align*}
	with the actual macroscopic temperature $T_i^{\text{nr}}$ of $\overline{f_i}$ defined by
	\begin{align*}
		3n_i^{\text{nr}}T_i^{\text{nr}}&=m_i\int_{\mathbb{R}^3}|v-u_i^{\text{nr}}|^2\overline{f}_i \,dv.
	\end{align*}
	For this, we recall from \eqref{denominator} that the right-hand side of the relation in Proposition 3.1 (2) is expanded as
	\begin{align}\label{rhs2}
	\sum_{i=1}^{N}\frac{m_i\mathcal{N} }{\tau_i}\int_{\mathbb{R}^3}\overline{f}_i\,dv-\frac{\varepsilon^2}{2}\frac{\left|\sum_{i=1}^{N}\frac{m_i\mathcal{N} }{\tau_i}\int_{\mathbb{R}^3} v\overline{f}_i\,dv\right|^2}{ \sum_{i=1}^{N}\frac{m_i\mathcal{N} }{\tau_i}\int_{\mathbb{R}^3}\overline{f}_i\,dv }+\mathcal{O}(\varepsilon^4).
	\end{align}
	For the left-hand side of the relation in Proposition 3.1 (2), it follows from \eqref{lhs}, \eqref{modified} and \eqref{expansion} that
	\begin{align}\label{lhs2}
	\begin{split}
	&\sum_{i=1}^{N}\frac{m_i}{\tau_i}\frac{\int_{\mathbb{R}^3} e^{-c\widetilde{\beta}p_i^0}\,dp_i}{\int_{\mathbb{R}^3} e^{-c\widetilde{\beta}p_i^0}\,\frac{dp_i}{p_i^0}}\int_{\mathbb{R}^3}f_i \,\frac{dp_i}{p_i^0}\cr
	&=\sum_{i=1}^{N}\frac{m_i\mathcal{N}}{\tau_i}\left(1+\frac{3}{2\beta_i}+\mathcal{O}\left(\beta_i^{-2}\right)\right)\int_{\mathbb{R}^3}\overline{f}_i \,\frac{dv}{\sqrt{1+|\varepsilon v|^2}}\cr
	&=\sum_{i=1}^{N}\frac{m_i\mathcal{N}}{\tau_i}\left(1+\frac{3}{2\beta_i}+\mathcal{O}(\varepsilon^4)\right)\left(\int_{\mathbb{R}^3}\overline{f}_i \,dv-\frac{\varepsilon^2}{2}\int_{\mathbb{R}^3}|v|^2\overline{f}_i \,dv+\mathcal{O}(\varepsilon^4)\right),
	\end{split}
	\end{align}
	where we used Lemma \ref{beta epsilon} and the following expansion
	$$
	\frac{1}{\sqrt{1+|\varepsilon v|^2}}=1-\frac{\varepsilon^2|v|^2}{2}+\mathcal{O}(\varepsilon^4).
	$$
	Comparing both sides \eqref{rhs2} and \eqref{lhs2} of the relation in Proposition 3.1 (2), one finds
	\begin{align*}
	&\sum_{i=1}^{N}\frac{m_i\mathcal{N} }{\tau_i}\int_{\mathbb{R}^3}\overline{f}_i\,dv-\frac{\varepsilon^2}{2}\frac{\left|\sum_{i=1}^{N}\frac{m_i\mathcal{N} }{\tau_i}\int_{\mathbb{R}^3} v\overline{f}_i\,dv\right|^2}{ \sum_{i=1}^{N}\frac{m_i\mathcal{N} }{\tau_i}\int_{\mathbb{R}^3}\overline{f}_i\,dv }\cr
	&\qquad=\sum_{i=1}^{N}\frac{m_i\mathcal{N}}{\tau_i}\left(\int_{\mathbb{R}^3}\overline{f}_i \,dv+\frac{3}{2\beta_i}\int_{\mathbb{R}^3}\overline{f}_i \,dv-\frac{\varepsilon^2}{2}\int_{\mathbb{R}^3}|v|^2\overline{f}_i \,dv\right) +\mathcal{O}(\varepsilon^4)
	\end{align*}
	where $\varepsilon^2/\beta_i$ was absorbed into $\mathcal{O}(\varepsilon^4)$ by Lemma \ref{beta epsilon}. Using the relation $\nu_i=s/\tau_i$ and $\beta_i=m_ic^2\widetilde{\beta}$,	we get
	\begin{align*}
	&\frac{3}{2}\left(\sum_{i=1}^{N}\nu_i\int_{\mathbb{R}^3}\overline{f}_i \,dv \right)\frac{1}{c^2\widetilde{\beta}}\\
	&\qquad=\sum_{i=1}^{N} \nu_im_i\frac{\varepsilon^2}{2}\int_{\mathbb{R}^3}|v|^2\overline{f}_i \,dv-\frac{\varepsilon^2}{2}\frac{\left|\sum_{i=1}^{N} \nu_im_i \int_{\mathbb{R}^3} v\overline{f}_i\,dv\right|^2}{ \sum_{i=1}^{N}\nu_i m_i\int_{\mathbb{R}^3}\overline{f}_i\,dv }+\mathcal{O}(\varepsilon^4)\\
	&\qquad=\sum_{i=1}^{N} \nu_im_i\frac{\varepsilon^2}{2}\int_{\mathbb{R}^3}|v|^2\overline{f}_i \,dv-\frac{\varepsilon^2}{2}\sum_{i=1}^{N} \nu_im_i n_i|U^{\text{nr}}|^2+\mathcal{O}(\varepsilon^4)
	\end{align*}
	and hence,
	\begin{align*}
		\frac{1}{\varepsilon^2c^2\widetilde{\beta}}=\frac{\sum_{i=1}^{N} \nu_im_i\frac{1}{2}\int_{\mathbb{R}^3}|v|^2\overline{f}_i \,dv-\frac{1}{2}\sum_{i=1}^{N} \nu_im_i n_i^{\text{nr}}|U^{\text{nr}}|^2}{\frac{3}{2}\sum_{i=1}^{N}\nu_im_i\int_{\mathbb{R}^3}\overline{f}_i \,dv}+\mathcal{O}(\varepsilon^2)
	\end{align*}
	Finally, using the fact that
	\begin{align*}
	\frac{3}{2}\sum_{i=1}^{N}\nu_i n_i^{\text{nr}}T_i^{\text{nr}}&=\frac{1}{2}\sum_{i=1}^{N} \nu_im_i\int_{\mathbb{R}^3}|v-u_i^{\text{nr}}|^2\overline{f}_i \,dv\cr
	&=\frac{1}{2}\sum_{i=1}^{N} \nu_im_i\left(\int_{\mathbb{R}^3}|v|^2\overline{f}_i \,dv-n_i^{\text{nr}}|u_i^{\text{nr}}|^2\right),
	\end{align*}
	we obtain from \eqref{UT}
	\begin{align*}
		\frac{1}{\varepsilon^2c^2\widetilde{\beta}}&=\frac{\sum_{i=1}^{N}\nu_i\left(\frac{1}{2} m_i n_i^{\text{nr}}\left(|u_i^{\text{nr}}|^2-|U^{\text{nr}}|^2\right)+\frac{3}{2} n_i^{\text{nr}}T_i^{\text{nr}}\right)}{\frac{3}{2}\sum_{i=1}^{N}\nu_im_in_i^{\text{nr}}}+\mathcal{O}(\varepsilon^2)\\
		&=T^{\text{nr}}+\mathcal{O}(\varepsilon^2)
	\end{align*}
	which completes the proof.
	
	
\end{proof}
\end{proposition}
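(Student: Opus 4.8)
The plan is to begin from the explicit dimensionless representation of $\overline{\mathcal{J}}_i(\overline{f})$ supplied by Lemma \ref{dimensionless J}, and to show that as $\varepsilon\to0$ its scalar amplitude and its exponential factor converge separately to those of the Maxwellian \eqref{Maxwellian}. Setting $\beta_i=m_ic^2\widetilde{\beta}$, I would Taylor expand every $\varepsilon$-dependent quantity around $\varepsilon=0$, retaining terms through $\mathcal{O}(\varepsilon^2)$. Three objects must be controlled: the normalizing ratio (the prefactor), the argument of the exponential, and the product $\varepsilon^2 c^2\widetilde{\beta}$, which must be shown to converge to $1/T^{\mathrm{nr}}$. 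The first two are essentially bookkeeping with the asymptotics already assembled in the preceding lemmas; the third is where the real work lies.

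For the prefactor $\bigl(\int\overline{f}_i(1+|\varepsilon v|^2)^{-1/2}\,dv\bigr)\big/\bigl(\int e^{-\beta_i\sqrt{1+|\varepsilon v|^2}}(1+|\varepsilon v|^2)^{-1/2}\,dv\bigr)$, I would use $(1+|\varepsilon v|^2)^{-1/2}=1+\mathcal{O}(\varepsilon^2)$ in the numerator, so that it equals $n_i^{\mathrm{nr}}+\mathcal{O}(\varepsilon^2)$, and rewrite the denominator through the modified Bessel representation \eqref{modified}. Combining the large-argument expansion \eqref{expansion} with Lemma \ref{beta epsilon}, which guarantees $\beta_i\to\infty$ with $\beta_i^{-1}=\mathcal{O}(\varepsilon^2)$, the denominator is $\tfrac{4\pi}{\varepsilon^3\beta_i}\sqrt{\tfrac{\pi}{2\beta_i}}e^{-\beta_i}(1+\mathcal{O}(\beta_i^{-1}))$, so the prefactor collapses to $\varepsilon^3(\beta_i/2\pi)^{3/2}e^{\beta_i}(n_i^{\mathrm{nr}}+\mathcal{O}(\varepsilon^2))$; the factor $e^{\beta_i}$ is precisely what will cancel the $e^{-\beta_i}$ emerging from the exponent.

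For the exponent $\overline{U}^0\sqrt{1+|\varepsilon v|^2}-\varepsilon\overline{U}\cdot v$, I would substitute the dimensionless four-velocity \eqref{dimensionless U} and expand its numerator and denominator separately using $\sqrt{1+|\varepsilon v|^2}=1+\tfrac12|\varepsilon v|^2+\mathcal{O}(\varepsilon^4)$ and $(1-|\varepsilon x|^2)^{1/2}=1-\tfrac12\varepsilon^2|x|^2+\mathcal{O}(\varepsilon^4)$. Dividing the two expansions and multiplying out, the quadratic cross terms reorganize into a perfect square, giving $1+\tfrac{\varepsilon^2}{2}|v-U^{\mathrm{nr}}|^2+\mathcal{O}(\varepsilon^4)$ with $U^{\mathrm{nr}}$ exactly the mixture bulk velocity of \eqref{UT}. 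Multiplying by $-\beta_i$ leaves $-\beta_i$ together with $-\varepsilon^2 m_ic^2\widetilde{\beta}\,|v-U^{\mathrm{nr}}|^2/2+\mathcal{O}(\varepsilon^2)$.

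The main obstacle is the third step: proving $\lim_{\varepsilon\to0}\varepsilon^2 c^2\widetilde{\beta}=1/T^{\mathrm{nr}}$. Lemma \ref{beta epsilon} yields only the order $\widetilde{\beta}^{-1}=\mathcal{O}(\varepsilon^2)$ and not the constant, so the temperature must be read off from the next order of the nonlinear relation in Proposition \ref{JF}(2). The strategy is to expand both sides of that relation to $\mathcal{O}(\varepsilon^2)$: on the left the Bessel ratio contributes $1+\tfrac{3}{2\beta_i}+\mathcal{O}(\beta_i^{-2})$ and the volume element contributes $-\tfrac{\varepsilon^2}{2}\int|v|^2\overline{f}_i\,dv$, while on the right the quadratic form produces the mixture kinetic energy $-\tfrac{\varepsilon^2}{2}\sum\nu_i m_i n_i^{\mathrm{nr}}|U^{\mathrm{nr}}|^2$. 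Equating the $\mathcal{O}(\varepsilon^2)$ terms, the leading $\sum\tfrac{m_i\mathcal{N}}{\tau_i}\int\overline{f}_i\,dv$ cancels, and solving for $1/(\varepsilon^2 c^2\widetilde{\beta})$ gives a total-energy expression over $\tfrac32\sum\nu_i m_i n_i^{\mathrm{nr}}$; rewriting the numerator via $3n_i^{\mathrm{nr}}T_i^{\mathrm{nr}}=m_i\int|v-u_i^{\mathrm{nr}}|^2\overline{f}_i\,dv$ reproduces the definition of $T^{\mathrm{nr}}$ in \eqref{UT}. The delicate point is that the $\tfrac{3}{2\beta_i}$ correction and the $\mathcal{O}(\varepsilon^2)$ volume-element correction are of the same order and must combine correctly, which is exactly why Lemma \ref{beta epsilon} is invoked to absorb $\varepsilon^2/\beta_i$ into $\mathcal{O}(\varepsilon^4)$, together with the exact cancellation of the leading $\mathcal{O}(1)$ terms. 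Once $\varepsilon^2 m_ic^2\widetilde{\beta}\to m_i/T^{\mathrm{nr}}$ is secured, substituting into the collected prefactor and exponent produces $n_i^{\mathrm{nr}}(m_i/2\pi T^{\mathrm{nr}})^{3/2}\exp(-m_i|v-U^{\mathrm{nr}}|^2/2T^{\mathrm{nr}})$, which is precisely \eqref{Maxwellian}.
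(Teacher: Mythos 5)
Your proposal is correct and follows essentially the same route as the paper's own proof: the same three-part decomposition (prefactor via the Bessel representation \eqref{modified} and expansion \eqref{expansion}, exponent via expanding \eqref{dimensionless U} into the perfect square $1+\tfrac{\varepsilon^2}{2}|v-U^{\text{nr}}|^2+\mathcal{O}(\varepsilon^4)$, and temperature identification by matching both sides of the relation in Proposition \ref{JF}~(2) at order $\varepsilon^2$), with Lemma \ref{beta epsilon} invoked in exactly the same places to absorb $\varepsilon^2/\beta_i$ into $\mathcal{O}(\varepsilon^4)$. No substantive differences to report.
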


\noindent{\bf Data availability:} No data was used for the research described in the article.\newline

\noindent{\bf Conflicts of interest:} The authors declare that they have no conflicts of interest. \newline

\section*{Acknowledgments}
 B.-H. Hwang was supported  by Basic Science Research Program through the National Research Foundation of Korea(NRF) funded by the Ministry of Education (No. NRF-2019R1A6A1A10073079).  S.-B. Yun was supported  by Basic Science Research Program through the National Research Foundation of Korea(NRF) funded by the Ministry of Education (No. NRF-2023R1A2C1005737).

\end{document}